\documentclass[12pt,reqno]{amsart}
\usepackage{dsfont}
\usepackage{bbm}
\usepackage{mathrsfs}
\usepackage{color}
\usepackage{comment}
\usepackage{caption}
\usepackage{enumitem}
\usepackage{diagbox}
\allowdisplaybreaks[4]
\usepackage{amssymb, amsmath}
\usepackage{bm}
\usepackage{verbatim}
\usepackage{graphicx,amsmath,amsfonts,amssymb,amscd,amsthm}
\usepackage{hyperref}
\usepackage[numbers,sort&compress]{natbib}
\usepackage{enumitem}
\usepackage{xstring}   
\usepackage{etoolbox}
\usepackage{multirow} 
\usepackage[numbers,sort&compress]{natbib}
\usepackage{array}
\usepackage{booktabs}
\usepackage{multirow}
\usepackage{makecell}
\usepackage{abstract}
\usepackage{float}
\usepackage{pifont}
\setenumerate[1]{itemsep=1pt,partopsep=0pt,parsep=\parskip,topsep=1pt}
\setitemize[1]{itemsep=1pt,partopsep=0pt,parsep=\parskip,topsep=1pt}

\newtheorem{theorem}{Theorem}[section]
\newtheorem{lemma}{Lemma}[section]
\newtheorem{proposition}{Proposition}[section]

\newtheorem{definition}{Definition}[section]

 \newcommand{\Bp}{\begin{proof}}
 \newcommand{\Ep}{\end{proof}}
\hypersetup{colorlinks,linkcolor=blue,citecolor=blue}
\newtheoremstyle{kai}
{3pt} {3pt} {} {} {\bfseries} {.} {.5em} {}
\def\EquationsBySection{\def\theequation
{\thesection.\arabic{equation}}
\@addtoreset{equation}{section}}

\newcommand\old[1]{}
\renewcommand{\theequation}{\thesection.\arabic{equation}}

 \newcommand{\R}{\mathbb{R}}
\newcommand{\V}{\mathrm{d} \mathrm{V_g}}
\newcommand{\red}{\textcolor[rgb]{1.00,0.00,0.00}}

\newcommand{\grad}{\nabla_M}

\newcommand{\beq}{\begin{equation}}
\newcommand{\eeq}{\end{equation}}
\newfloat{figtab}{htb}{fgtb}
\makeatletter
  \newcommand\figcaption{\def\@captype{figure}\caption}
  \newcommand\tabcaption{\def\@captype{table}\caption}
\makeatother
\DeclareMathOperator{\dive}{div}
\numberwithin{equation}{section}
\addcontentsline{toc}{section}{Acknowledgement}

\begin{document}
\author{Xin Xu}
\address{Xin Xu, School of Mathematical Sciences, South China Normal University, Guangdong 510631, China.}
\email{xuxin1994pkq@163.com}

\author{Kexin Zhang}
\address{Kexin Zhang, School of Mathematics, Sun Yat-sen University, Guangzhou 510275, China.}
\email{kxzmath@163.com}

\title
{Asymptotics of the principal eigenvalue of an elliptic operator on closed and orientable  Riemannian manifolds: small diffusion}
\date{}
\maketitle

\begin{abstract}
This paper is concerned with the asymptotic behavior of the principal eigenvalue $\lambda(D)$ of the elliptic eigenvalue problem
\[
-D\Delta_{M}u - a\langle \nabla_M f, \nabla_M u\rangle_g + c u = \lambda(D)u,
\]
posed on a closed orientable Riemannian manifold $(M,g)$, in the small-diffusion limit $D \to 0^+$. Under the assumption that $f$ is a Morse function on $M$, we establish that the limiting value $\lim_{D\to 0}\lambda(D)$ is completely characterized by the critical points of $f$ and the associated Riemannian Hessian, specifically through the values of $c$ and the  Riemannian Hessian eigenvalues at those points.
\end{abstract}

\noindent \textbf{Keywords:} Principal eigenvalue, asymptotic behavior,  Riemannian manifold.
\vskip20pt

\section{Introduction}

In this paper, we investigate the asymptotic behavior of the principal eigenvalue $\lambda(D)$ (for $D>0$) of the elliptic eigenvalue problem
 \begin{equation}\label{E}
  -D\Delta_{M}u-a\langle  \nabla_M f, \nabla_M u\rangle_g +c u=\lambda(D)u, \text{on } M, 
\end{equation}
in the small-diffusion limit $D \to 0^+$. Here, $(M,g)$ denotes an $N$-dimensional closed (i.e., compact and boundaryless) orientable Riemannian manifold, $a$ is a fixed positive constant, and $f, c: M \to \mathbb{R}$ are smooth functions. The operators $\nabla_M$ and $\Delta_M$ stand for the gradient and the Laplace--Beltrami operator induced by the metric $g$, respectively.

The drifted Laplacian $\Delta_f=\Delta_{M}+\langle  \nabla_M f, \nabla_M \cdot\rangle_g$ arises naturally in a variety of mathematical contexts \cite{zhoudetang,book,CCL6,CCL7,GT}. Such operators enjoy favorable analytic properties: although they are not self-adjoint with respect to the standard Riemannian volume measure, they become self-adjoint when formulated on appropriate weighted Sobolev spaces (see, e.g., \cite{zhoudetang}). This observation provides a variational framework for studying the principal eigenvalue. Within this framework, combined with classical elliptic theory, it follows that problem \eqref{E} admits a unique principal eigenvalue $\lambda(D)$. Moreover, $\lambda(D)$ is real and simple, and its corresponding principal eigenfunction   can be chosen to be strictly positive on $M$ (see, e.g.  \cite{RGA}).

\subsection{Main result}

Our primary goal is to characterize the asymptotic behavior of the principal eigenvalue $\lambda(D)$ in the small-diffusion limit $D \to 0^+$. To this end, we introduce the critical set of $f$:
\[
\Sigma = \bigl\{ p \in M \bigm| |\nabla_M f(p)| = 0 \bigr\}.
\]
Our main theorem is then stated as follows.

\begin{theorem}\label{smalld}
Assume that $f$ is a Morse function on a closed orientable Riemannian manifold $(M,g)$. Let $\lambda(D)$ denote the principal eigenvalue of problem \eqref{E}. Then
\[
\lim_{D \to 0} \lambda(D)
= \min_{p \in \Sigma} \left\{ c(p) + \frac{a}{2} \sum_{i=1}^N \bigl( |\kappa_i(p)| + \kappa_i(p) \bigr) \right\},
\]
where $\kappa_1(p), \ldots, \kappa_N(p)$ are the eigenvalues of the Riemannian Hessian operator $H_p(f)$: $T_p M\to T_p M$ associated with the metric $g$, defined by 
\[
\operatorname{Hess}_p (f)(v, w) =g(H_p(f)(v),w), 
\quad \forall  v,w\in T_p M.
\]
Equivalently, for each $i = 1, \ldots, N$, there exists a nonzero vector $\zeta_i \in T_p M$ such that
\[H_p(f)(\zeta_i)=\kappa_i(p)\zeta_i.
\]
\end{theorem}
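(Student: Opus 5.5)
Since the operator is self-adjoint in a weighted space, I will use the variational characterization. Set $w=e^{af/D}$ and rewrite \eqref{E} as $-D\,w^{-1}\dive_g(w\nabla_M u)+cu=\lambda u$. The principal eigenvalue is then
\[
\lambda(D)=\inf_{u\neq0}\frac{\int_M \bigl(D|\nabla_M u|^2+cu^2\bigr)e^{af/D}\,\V}{\int_M u^2 e^{af/D}\,\V}.
\]
The substitution $v=e^{af/(2D)}u$ gives the unitarily equivalent Schr\"odinger form
\[
\lambda(D)=\inf_{v\neq0}\frac{\int_M \bigl(D|\nabla_M v|^2+V_D v^2\bigr)\V}{\int_M v^2\,\V},\qquad V_D=c+\frac{a^2}{4D}|\nabla_M f|^2+\frac a2\Delta_M f .
\]
Rescaling by $h=D^{1/2}$, this becomes the semiclassical harmonic-approximation problem for $-h^2\Delta_M+\frac{a^2}{4}|\nabla_M f|^2+h^2\bigl(c+\frac a2\Delta_M f\bigr)$ divided by $h^2$. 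The potential wells are exactly the critical points $\Sigma$, and they are nondegenerate because $f$ is Morse.

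\textbf{Local model.} At $p\in\Sigma$, take normal coordinates with $f=f(p)+\frac12\sum_i\kappa_i x_i^2+O(|x|^3)$. Then $|\nabla_M f|^2=\sum_i\kappa_i^2x_i^2+O(|x|^3)$ and $\Delta_M f(p)=\sum_i\kappa_i$. The model operator $-D\Delta+\frac{a^2}{4D}\sum_i\kappa_i^2x_i^2$ is a harmonic oscillator with ground energy $\frac a2\sum_i|\kappa_i|$. Adding $c(p)+\frac a2\sum_i\kappa_i$ gives the local value
\[
\mu_p=c(p)+\frac a2\sum_i\bigl(|\kappa_i|+\kappa_i\bigr).
\]

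\textbf{Upper bound.} Fix $p$ achieving the minimum. Use the test function $v=\chi(x)\exp\bigl(-\frac{a}{4D}\sum_i|\kappa_i|x_i^2\bigr)$, where $\chi$ is a cutoff in a fixed coordinate ball. This function is concentrated on the scale $|x|\sim\sqrt D$. The metric corrections $g_{ij}-\delta_{ij}=O(|x|^2)$, the volume factor, and the cubic errors in the potential are $O(|x|^3/D)\sim O(\sqrt D)$ on this scale. The cutoff contributes errors of size $e^{-C/D}$. The Rayleigh quotient is therefore $\mu_p+o(1)$, which gives $\limsup_{D\to0}\lambda(D)\le\min_{p\in\Sigma}\mu_p$.

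\textbf{Lower bound (main obstacle).} I would use an IMS localization, with the partition of unity scaled to $D$. Choose $\chi_0^2+\sum_{p\in\Sigma}\chi_p^2=1$, where $\chi_p$ is supported in $|x|\le 2D^{2/5}$ around $p$ and equals $1$ on $|x|\le D^{2/5}$. The localization error is $D\sum|\nabla\chi|^2=O(D^{1/5})\to0$.

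On $\mathrm{supp}\,\chi_0$ we have $|\nabla_M f|^2\ge C D^{4/5}$ by nondegeneracy. Hence $V_D\ge C D^{-1/5}-C'\to\infty$, so this piece contributes nothing below the threshold.

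On $\mathrm{supp}\,\chi_p$ I compare with the flat harmonic oscillator. The metric comparison costs a factor $(1+O(D^{4/5}))$ in the kinetic term and the volume. The potential satisfies
\[
\frac{a^2}{4D}|\nabla_M f|^2\ge\frac{a^2}{4D}\sum_i\kappa_i^2x_i^2\bigl(1-O(|x|)\bigr),
\]
and since $|x|=O(D^{2/5})\to0$, the spectral bound for the oscillator gives at least $\frac a2\sum_i|\kappa_i|(1-o(1))$. The remaining terms satisfy $c+\frac a2\Delta_M f=\mu_p-\frac a2\sum_i|\kappa_i|+o(1)$ on this support. Summing the pieces gives $\liminf_{D\to0}\lambda(D)\ge\min_{p\in\Sigma}\mu_p$.

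The delicate point is choosing the localization scale so that three constraints hold at once: the IMS error must vanish, the cubic potential error relative to the quadratic part must vanish, and the outside region must be pushed to infinity. The scale $D^{2/5}$, anywhere strictly between $D^{1/2}$ and $1$, works for all three. Because $M$ is compact and $\Sigma$ is finite, all these estimates are uniform.
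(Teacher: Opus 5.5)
Your proof is correct. The upper bound is essentially the paper's: a Gaussian test function in normal coordinates aligned with the Hessian eigenbasis. The paper uses exponents $|\kappa_i|+\delta$, truncates at $|y_i|\le D^{-1/8}$ in the variable $y=x/\sqrt{2D/a}$, and then sends $\delta\to0$. Since $f$ is Morse, $\kappa_i\neq0$, so your exact Gaussian with a fixed cutoff is simpler.

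The lower bound follows a genuinely different route. The paper proceeds as follows:
\begin{itemize}
\item It fixes $R$ and a partition $\{\zeta_k^2\}$ subordinate to the balls $B_g(p_k,R)$ and to $M_0$.
\item It works only with the eigenfunction $w$, via the identity of Lemma 2.2 applied to each piece separately.
\item On each ball it compares with the Euclidean functional (cost $O(R^2)$) and quotes a local Euclidean estimate from PZZ19 (additive loss $O(R)$).
\item It treats $M_0$ by a concentration argument rather than a potential bound: Lemma 2.1 with $|\nabla_M f|\ge\delta$ on $M_0$ gives $\int W_0^2=O(D)$, so the mass of $w$ ends up in the balls.
\item The localization error is $O(D/R^2)$, and one takes $D\to0$ and then $R\to0$.
\end{itemize}
You instead apply IMS to the quadratic form for an arbitrary $v$ at the $D$-dependent scale $D^{2/5}$. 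Neither the eigenfunction nor a concentration step is needed. The exterior piece drops out because its potential is $\gtrsim D^{-1/5}$. Each interior piece is settled directly by the harmonic-oscillator ground energy after an $O(D^{4/5})$ metric comparison. This gives a single limit, with an explicit rate if you optimize the scale. The price is a quantitative use of the Morse hypothesis, $|\nabla_M f|\ge c\,\operatorname{dist}(\cdot,\Sigma)$, to push a shrinking exterior region to infinity. The paper only needs $|\nabla_M f|$ bounded below on a fixed compact set, and its local estimate at $p_k$ does not use invertibility of the Hessian.

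Two minor points. First, the partition should be built so that $\chi_0=(1-\sum_p\chi_p^2)^{1/2}$ stays smooth with gradient $O(D^{-2/5})$, e.g.\ $\chi_p=\cos\theta$ and $\chi_0=\sin\theta$ for a smooth radial profile $\theta$. Second, your semiclassical remark is misscaled: $D$ times the operator is $-h^2\Delta_M+\frac{a^2}{4}|\nabla_M f|^2+h\bigl(c+\frac a2\Delta_M f\bigr)$ with $h=D$. That heuristic is not used in your argument, so it does no harm.
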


The theorem reveals that the small-diffusion asymptotics of $\lambda(D)$ on closed manifolds are entirely governed by the critical points of $f$ and the associated Riemannian Hessian, with no boundary contributions arising due to the absence of boundary. This result constitutes a Riemannian analogue of the known asymptotic characterization of principal eigenvalues in bounded Euclidean domains \cite{FA7273,DEF73,DF78,FS97,CL12,PZZ19,XZ26}. For the small-diffusion/large-advection  asymptotics of the second order elliptic and time-periodic parabolic operators in the Euclidean case, we refer to \cite{Bcmp,CL,LLPZ,LLPZd,LLPZx,PZhao} and the references therein. To contextualize our findings, we briefly review several representative results in the Euclidean setting.

\subsection{Existing results}
The asymptotic behavior of the principal eigenvalue for drifted Laplacians in the small-diffusion limit $D \to 0^+$ has been extensively studied in the literature. The majority of existing results, however, are confined to bounded domains in Euclidean space $\mathbb{R}^N$, we refer to \cite{CL12,PZZ19}. Specifically, let $\Omega \subset \mathbb{R}^N$ be a bounded domain, and consider the eigenvalue problem
\begin{equation}\label{E_Euclidean}
\left\{
\begin{aligned}
&-D\Delta v - a \nabla m(x)\cdot \nabla v + c(x)v = \lambda_{\Omega}(D)v \quad &&\text{in } \Omega,\\
&\mathcal{B}v = 0 \quad &&\text{on } \partial\Omega,
\end{aligned}
\right.
\end{equation}
subject to, for instance, Dirichlet or Neumann boundary conditions, where $m \in C^2(\overline{\Omega})$ and $c \in C(\overline{\Omega})$.

For notational convenience, we introduce the following critical sets: 
$$\Sigma_{\Omega}^1=\{x\in \Omega\bigm| |\nabla m(x)|=0 \},\quad \Sigma_{\Omega}^2=\{x\in \partial\Omega\bigm| |\nabla m(x)|=0 \},$$ 
and $$\Sigma_{\Omega}^3=\{x\in \partial\Omega\bigm| |\nabla m(x)|=\nabla m(x)\cdot \nu(x)>0 \}
 $$
where $\nu(x)$ denotes the outward unit normal to $\partial\Omega$ at $x$.

Under the Dirichlet boundary condition $\mathcal{B}v=v=0$ on $\partial \Omega$ (Dirichlet boundary condition), Peng, Zhang and Zhou in \cite{PZZ19} established the following results:
\begin{enumerate}
\item[(i)]
If $\Sigma_{\Omega}^1\cup\Sigma_{\Omega}^2=\emptyset$, then
\[
\lim_{D\to0}\lambda_{\Omega}(D)=+\infty.
\]

\item[(ii)]
If $\Sigma_{\Omega}^1\cup\Sigma_{\Omega}^2\neq\emptyset$ and if for every
$x\in\Sigma_{\Omega}^2$, $\nu(x)$ is an eigenvector of $D^2m(x)$ with corresponding eigenvalue $\kappa_N(x)=0$, then
\[
\lim_{D\to0}\lambda_{\Omega}(D)
=
\min_{x\in\Sigma_{\Omega}^1\cup\Sigma_{\Omega}^2}
\left\{
c(x)
+\frac{a}{2}
\sum_{i=1}^{N}
\bigl(|\kappa_i(x)|+\kappa_i(x)\bigr)
\right\},
\]
where $\kappa_1(x),\ \kappa_2(x),\ \ldots,\ \kappa_N(x)$
are the eigenvalues of $D^2m(x)$ with $\kappa_N(x)=0$.
\end{enumerate}

In particular, if $\Sigma_{\Omega}^1\neq \emptyset$ and $\Sigma_{\Omega}^2=\emptyset$, then the formula in (ii) reduces to
\[
\lim_{D\to0}\lambda_{\Omega}(D)
=
\min_{x\in\Sigma_{\Omega}^1}
\left\{
c(x)
+\frac{a}{2}
\sum_{i=1}^{N}
\bigl(|\kappa_i(x)|+\kappa_i(x)\bigr)
\right\}.
\]
We note that this formula is formally identical to that established in our Theorem \ref{smalld} for closed manifolds, with the eigenvalues of the Euclidean Hessian $D^2 m(x)$ replaced by the  eigenvalues of the Riemannian Hessian $\operatorname{Hess}_p f$. 
 
For the Neumann boundary condition, Chen and Lou \cite{CL12}  showed that, besides the interior and boundary critical points of $m$ (namely $\Sigma_{\Omega}^1$ and $\Sigma_{\Omega}^2$), the boundary points in $\Sigma_{\Omega}^3$ also contribute to the asymptotic behavior of the principal eigenvalue. For the Robin boundary condition, Peng, Zhang and Zhou \cite{PZZ19} further established that the limiting behavior of the principal eigenvalue depends not only on the interior and boundary critical points of $m$, but also on the boundary points in $\Sigma_{\Omega}^3$ and the Robin boundary parameter.

The above existing results show that, in bounded Euclidean domains, the small diffusion asymptotic behavior of the principal eigenvalue $\lambda(D)$ are closely related to the critical points  of  $m$. Moreover, the presence of the boundary may introduce additional boundary contributions, whose forms depend on the prescribed boundary condition. In contrast, our present work is concerned with closed Riemannian manifolds, where no boundary effects arise naturally. Our main theorem shows that the Euclidean asymptotic formula corresponding to the interior critical points remains valid in this Riemannian geometric setting.

The proof of Theorem \ref{smalld} is based on the variational characterization of the principal eigenvalue for the drifted Laplacian. The lower bound estimate follows from suitable estimates of the associated energy functional. For the upper bound estimate, we perform a local analysis near each critical point of $f$. By introducing geodesic normal coordinates, the operator is locally reduced to its Euclidean counterpart, while the errors arising from the Riemannian metric are carefully controlled.   

This paper is organized as follows. In Section 2, we present some preliminaries and the varitional characterizations of $\lambda(D)$. In Section 3 and 4, we prove the upper and lower estimates for the principal eigenvalue respectively, which complete the proof of the main theorem. 

\section{Preliminaries}\label{pre}

Let \((M,g)\) be a closed, orientable Riemannian manifold of dimension $N$.

\subsection{Critical points and Morse functions}
Let $(V,\phi)$ be a local coordinate chart of $p^*$ on $M$, and write  
\[
\phi(p^*)=\bigl(y_{1}(p^*),\dots,y_{N}(p^*)\bigr)\triangleq y\in\mathbb{R}^{N},\quad p^*\in V,
\]
where \(y_{k}:V\to\mathbb{R}\) are the associated coordinate functions.
In these coordinates, the Riemannian metric $g$ is represented by the smooth,
symmetric, positive-definite matrix
 \((g_{ij}(y))_{N\times N}\).

\begin{definition}[Critical point \cite{jmlee}]\label{cp}
A point \(p^* \in M\) is a {\bf{critical point}} of a smooth function \(f: M \to \mathbb{R}\) if \((df)_{p^*} = 0\). A critical point \(p^*\) is called {\bf{non-degenerate}} if the Hessian bilinear form \(\operatorname{Hess}_{p^*}(f): T_{p^*}M \times T_{p^*}M \to \mathbb{R}\) is non-degenerate.
\end{definition}

Let \(\tilde{f} := f \circ \phi^{-1}\) denote the local coordinate representation of \(f\). Since \((df)_{p^*} = 0\), we have
\[
\left.\frac{\partial}{\partial y_{i}}\right\vert_{{p^*}} f
=\left.\frac{\partial}{\partial y_{i}}\right\vert_{\phi({p^*})}(f\circ\phi^{-1})
=\frac{\partial \tilde{f}}{\partial y_{i}}(\phi({p^*}))=0,\quad \text{for all }i=1,\dots,N
\]
by the standard chain rule (see, e.g.,~\cite{jmlee}). Consequently, the coordinate expression for the squared norm of the gradient of \(f\) at \(p^*\) is
\[
|\nabla_M f_{p^*}|^2=g^{ij}\frac{\partial \tilde{f}}{\partial y_{i}}\frac{\partial \tilde{f}}{\partial y_{j}}(\phi({p^*})), \quad \text{(summation over }i,j \text{ by Einstein convention),}
\]
where \(g^{ij}\) is the \((i,j)\)-component of the inverse matrix of the corresponding Riemannian metric \((g_{ij}(y))_{N\times N}\). Hence \((df)_{p^*} = 0\) is equivalent to \((\nabla_M f)_{p^*} = 0\).

Since $p^*$ is a critical point of $f$, the coordinate components of
the Riemannian Hessian (as a symmetric bilinear form, see \cite{morse2024}) at $p^*$ satisfy
\[
\operatorname{Hess}_{p^*}(f)(v,w)
=
\frac{\partial^2\tilde f}
{\partial y_i\partial y_j}
\left(\phi(p^*)\right)v_iw_j,
\]
where $v,w\in T_{p^*}V$, $(v_1,...,v_n)=(d\phi)_{p^*}(v)$ and $(w_1,...,w_n)=(d\phi)_{p^*}(w)$.
Thus, \(p^*\) is non-degenerate if and only if the above bilinear form is non-degenerate, i.e. the matrix
\[
\left(\frac{\partial^2 \tilde f}{\partial y_i \partial y_j}\bigl(\phi(p^*)\bigr)\right)_{i,j},
\]
is invertible. This condition is independent of the choice of local coordinates. See \cite{jmlee}.
\begin{definition}[Morse function \cite{MT}]\label{def:morse}
A smooth function \(f:M\to\mathbb R\) is called a Morse function if
all of its critical points are non-degenerate.
\end{definition}

Denote by
\[
\Sigma:
=
\{p\in M:(df)_p=0\}
\]
the set of critical points of \(f\). If \(f\) is Morse and \(M\) is compact, then every critical point of \(f\) is isolated; consequently, \(\Sigma\) is a finite set.

\subsection{The geodesic normal coordinates}
We fix a critical point \(p^* \in \Sigma\).

\subsubsection{Eigenvalues of the Riemannian Hessian}

The eigenvalues $\kappa_1(p^*),\dots,\kappa_N(p^*)$ of the Riemannian Hessian operator $H_{p^*}(f)$: $T_{p^*} M\to T_{p^*} M$ associated with the metric $g$ are defined by the relation
\[
\operatorname{Hess}_{p^*} (f)(v, w) =g(H_{p^*}(f)(v),w), 
\quad \forall  v,w\in T_{p^*} M.
\]
That is, for each $i = 1, \ldots, N$, there exists a nonzero vector $e_i \in T_p M$ such that
\[H_{p^*}(f)(e_i)=\kappa_i({p^*})e_i.
\]
This definition is intrinsic and then we have
\begin{align}\label{eij}
    \operatorname{Hess}_{p^*}(f) (e_i, w) = \kappa_i(p^*) g(e_i, w) \quad\text{for all } w\in T_{p^*}M.
\end{align}
Since $\operatorname{Hess}_{p^*}(f)$ is symmetric, the corresponding
linear operator is self-adjoint with respect to the metric $g$.
Therefore, the eigenvectors
$e_1,\ldots,e_N$, can be chosen to form a $g$-orthonormal
basis of $T_{p^*}M$, i.e.,
\[
g(e_i,e_j)=\delta_{ij},
\qquad i,j=1,\ldots,N.
\]
Taking $w=e_j$ in \eqref{eij}, we obtain the diagonalization of the Hessian at $p^*$:
\begin{equation}\label{eq:hess-eigenbasis}
\operatorname{Hess}_{p^*}f(e_i,e_j)
=
\kappa_i g(e_i,e_j)
=
\kappa_i(p^*)\delta_{ij}.
\end{equation}

\subsubsection{The geodesic normal coordinates}

Let \(\varphi: U_R = B_g(p^*,R) \to B(0,R) \subset \mathbb R^N\) be a geodesic normal coordinate chart $(U_R,\varphi)$ around \(p^*\), with \(R>0\) sufficiently small. By the standard construction of geodesic normal coordinates (see, e.g., \cite{RGA}), this chart can be chosen such that its coordinate basis at \(p^*\) coincides with the \(g\)-orthonormal eigenbasis \(\{e_1,\ldots,e_N\}\) determined above. That is, 
\[
\varphi(p)=x=(x_1,...,x_N) \text{ for } p\in U_R, \quad \varphi(p^*)=0,  \quad 
\left.\frac{\partial}{\partial x_i}\right|_{p^*} = e_i, \quad i=1,\ldots,N.
\]
In these coordinates, the metric satisfies the well-known properties
\[
g_{ij}(0)=\delta_{ij}, \qquad
\Gamma_{ij}^k(0)=0, \qquad
\frac{\partial g_{ij}}{\partial x_k}(0)=0,
\]
where \(\Gamma_{ij}^k\) are the Christoffel symbols of the Levi-Civita connection of \(g\). Consequently, as \(x\to0\),
\begin{equation}\label{tay}
g_{ij}(x)
=
\delta_{ij}+O(|x|^2),
\quad
g^{ij}(x)
=
\delta^{ij}+O(|x|^2),
\quad
\sqrt{\det(g_{ij}(x))}
=
1+O(|x|^2).
\end{equation}

\subsubsection{Local expansion of the Morse function $f$ near $p^*$}

Let $(U_R,\varphi)$ be the geodesic normal coordinate chart constructed in the previous subsection, and write $\hat f = f \circ \varphi^{-1}$ for the coordinate representation of $f$. Since $p^*\in\Sigma$ is a critical point, we have
\[
\frac{\partial \hat f}{\partial x_i}(0)=0,\qquad i=1,\ldots,N.
\]
Moreover, by the choice of the coordinate basis $\{e_i\}$ and the definition of the Hessian eigenvalues, equation \eqref{eq:hess-eigenbasis} yields
\[
\frac{\partial^2 \hat f}{\partial x_i \partial x_j}(0)
=
\operatorname{Hess}_{p^*}(f)(e_i,e_j)
=
\kappa_i(p^*)\,\delta_{ij}.
\]
Thus the Hessian matrix at the origin is diagonal:
\[
\left(
\frac{\partial^2 \hat f}{\partial x_i \partial x_j}(0)
\right)_{1\le i,j\le N}
=
\operatorname{diag}(\kappa_1(p^*),\ldots,\kappa_N(p^*)).
\]
Applying Taylor's theorem around $x=0$, and noting that there are no linear terms, we obtain the local expansion
\begin{align}
\hat f(x)
&=
\hat f(0)
+
\sum_{i=1}^N
\frac{\partial \hat f}{\partial x_i}(0)x_i
+
\frac12
\sum_{i,j=1}^N
\frac{\partial^2 \hat f}{\partial x_i \partial x_j}(0)x_i x_j
+
O(|x|^3)
\nonumber\\
&=
f(p^*)
+
\frac12\sum_{i=1}^N \kappa_i(p^*)\,(x_i)^2
+
O(|x|^3),
\qquad x\to0.
\label{localexpansionf}
\end{align}

Consequently, in the subsequent analysis, whenever we perform local computations around a critical point, we shall always adopt such geodesic normal coordinates with the coordinate axes aligned with the $g$-orthonormal eigenbasis of the Riemannian Hessian at that point.

\subsection{Variational characterization of $\lambda(D)$.}
The equation 
\begin{equation*}
   -D\Delta_{M}u-a\langle  \nabla_M f, \nabla_M u\rangle_g +c u=\lambda(D)u,
\end{equation*}
can be rewritten in the following divergence form
\begin{equation*}
   -D\,\dive_g\left(e^{\frac{a}{D} f}\nabla_{M}u\right)+e^{\frac{a}{D} f} c u=\lambda(D)e^{\frac{a}{D} f} u.   
\end{equation*}
Thus, we have the following variational characterization.

\textbf{Variational characterization 1.} The principal eigenvalue $\lambda(D)$ is characterized by
\begin{equation}\label{var}
  \lambda(D)=\min_{\substack{u\in H^1(M,\V)\\ u\not\equiv 0}}
  \frac{\displaystyle\int_M e^{\frac{a}{D} f}\left(D|\nabla_{M}u|^2 + c |u|^2\right)\V}
  {\displaystyle\int_M e^{\frac{a}{D} f} u^2 \V},
\end{equation}
where $\V$ denotes the Riemannian volume element induced by the metric. The derivation of this variational formula is standard (see, e.g., \cite{zhoudetang,XZ26}). It is immediate that 
\[
c_*\le \lambda(D)\le c^*, 
\quad\text{with}\quad
c_*=\min_{p\in M} c(p),\quad c^*=\max_{p\in M} c(p).
\]
Without loss of generality, we assume throughout that \(c_*>0\).

Let \(u(D,\cdot)\) be the unique positive principal eigenfunction corresponding to $\lambda(D)$ in \eqref{E}, normalized by
\[
\int_{M} e^{ \frac{a}{D}f}u^2\V =1.
\]
Setting $w=e^{\frac{a f}{2 D} }u$, we obtain $\|w(D, \cdot)\|_{L^2(M,\V)}=1$ and $w$ satisfies
\begin{equation}\label{Ew}
   -D\Delta_{M}w+\left(\frac{a^2}{4 D}  |\nabla_M f|^2+ \frac{a}{2}\Delta_M f +c\right) w=\lambda(D)w.   
\end{equation}

Since for any $D>0$, $\int_M w^2(D,\cdot)\V=1$ and $M$ is compact, by the Prokhorov theorem (see \cite{yan}), there exist a subsequence $\{D_j\}_{j=1}^{+\infty}$ and a probability measure $\mu$ on $M$ such that for every $\zeta\in C(M)$,
\begin{equation}\label{weak}
\lim_{j\to+\infty}\int_M w^2(D_j,\cdot)\,\zeta\,\V=\int_M \zeta\mu(\mathrm{d} \mathrm{V_g})=\int_M \zeta\,\mathrm d\mu,
\end{equation}
where $C(M)$ denotes the space of continuous functions on $M$. By investigating the support of $\mu$, we can determine the asymptotics of $\lambda(D)$. 

From \eqref{Ew}, we also obtain the following equivalent variational characterization.

\textbf{Variational characterization 2.} The principal eigenvalue $\lambda(D)$ satisfies
\begin{equation}\label{var2}
  \lambda(D)=\min_{\substack{w\in H^1(M,\V)\\ \int_M w^2\V=1}}
  \int_M
 \left(D\left|\nabla_{M} w-\frac{a}{2 D} w\nabla_{M} f\right|^2 + c |w|^2 \right)\V.
\end{equation}

Define the functional
\begin{equation}\label{Eu}
  E(u):=\int_M
 D\left|\nabla_{M} u-\frac{a}{2D} u\nabla_{M} f\right|^2 \V
 =\int_M
 D\left|\nabla_{M} \ln{u}-\frac{a}{2D} \nabla_{M} f\right|^2 u^2 \V,
\end{equation}
where the second equality holds for positive \(u\). The following lemma provides a lower bound for \(E(u)\).
\begin{lemma}\label{lem:2.1}
For every \(u\in H^1(M,\V)\), we have 
\[
E(u)\ge \frac{a^2}{4D}  \int_{M} \left( | \nabla_M f|^2 +
     \frac{2D}{a} \Delta_M f \right) u^2 \V.
\]
\end{lemma}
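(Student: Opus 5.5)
The plan is to expand the square in the definition \eqref{Eu} of $E(u)$, drop the nonnegative term $D|\nabla_M u|^2$, and handle the cross term by integrating by parts on the closed manifold $M$, where no boundary terms appear. By density of $C^\infty(M)$ in $H^1(M,\V)$ and continuity of both sides of the claimed inequality in the $H^1$ norm (recall $f$ is smooth, so $\nabla_M f$ and $\Delta_M f$ are bounded), it suffices to prove the inequality for smooth $u$.

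For smooth $u$ we expand pointwise:
\[
D\left|\nabla_M u-\frac{a}{2D}u\nabla_M f\right|^2
= D|\nabla_M u|^2 - a\,u\langle \nabla_M u,\nabla_M f\rangle_g + \frac{a^2}{4D}u^2|\nabla_M f|^2 .
\]
The first term is nonnegative and will be discarded. For the cross term, note that $u\nabla_M u=\tfrac12\nabla_M(u^2)$. The divergence theorem on the closed orientable manifold $M$ then gives
\[
-a\int_M u\langle\nabla_M u,\nabla_M f\rangle_g\V
=-\frac a2\int_M\langle\nabla_M(u^2),\nabla_M f\rangle_g\V
=\frac a2\int_M u^2\,\Delta_M f\,\V .
\]

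Combining these two steps yields
\[
E(u)\ge \int_M\left(\frac{a^2}{4D}|\nabla_M f|^2+\frac a2\Delta_M f\right)u^2\V
=\frac{a^2}{4D}\int_M\left(|\nabla_M f|^2+\frac{2D}{a}\Delta_M f\right)u^2\V,
\]
which is the asserted inequality.

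There is no real obstacle here. The only points needing care are the following:
\begin{itemize}
\item the sign convention for $\Delta_M$, which is the nonpositive Laplace--Beltrami operator $\dive_g\nabla_M$, consistent with \eqref{Ew};
\item the absence of boundary terms, which holds because $M$ is closed;
\item the density argument extending the inequality from smooth $u$ to all of $H^1(M,\V)$.
\end{itemize}
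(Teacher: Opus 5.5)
Your proposal is correct and follows the same route as the paper: expand the square, integrate the cross term by parts on the closed manifold to get $\frac{a}{2}\int_M u^2\Delta_M f\,\V$, and discard the nonnegative term $D|\nabla_M u|^2$. Your density argument extending the inequality from smooth functions to $H^1(M,\V)$ is a small extra justification that the paper leaves implicit.
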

\begin{proof}
Expanding the square and integrating by parts, we obtain
\[ \begin{aligned}
     E (u) & = \int_{M}\left[ D | \nabla_M u|^2 
     -  a u \langle \nabla_M u,\nabla_M f\rangle
 + \frac{a^2}{4D} u^2  | \nabla_M f|^2\right]\V\\
     & = \int_{M}\left[ D | \nabla_M u|^2 +
     \frac{a}{2} u^2 \Delta_M f +
     \frac{a^2}{4D} u^2  | \nabla_M f|^2\right]\V\\
     & \ge \int_{M}\left[
     \frac{a^2}{4D} u^2  | \nabla_M f|^2 +
     \frac{a}{2} u^2 \Delta_M f
     \right]\V \\
     & = \frac{a^2}{4D}  \int_{M} \left( | \nabla_M f|^2 +
     \frac{2D}{a} \Delta_M f \right) u^2 \V.
   \end{aligned} \]
This proves the lemma.
\end{proof}
Lemma \ref{lem:2.1} indicates that, as $D \rightarrow 0$, the $L^2$-mass of the principal eigenfunction $w$ concentrates near the critical points of $f$. It is therefore natural to study the local behavior of $w$ near those points.

Let $\zeta$ be a smooth function. Multiplying the differential equation \eqref{Ew}
by $\zeta^2 w$ and integrating over $M$, we obtain
\[ \begin{aligned}
     &\int_{M} \lambda (D) \zeta^2 w^2 \V\\
     =& \int_{M}\left[- D \zeta^2 w
     \Delta_M w + \frac{a^2}{4D}  | \nabla_M f|^2 \zeta^2 w^2 +\frac{a}{2} \zeta^2
     w^2 \Delta_M f + c \zeta^2 w^2\right]\!\mathrm{d} \mathrm{V_g}\\
      =&\int_{M}\left[D \nabla_M w \cdot \nabla_M (\zeta^2 w) +
     \frac{a^2}{4D}  | \nabla_M f|^2 \zeta^2 w^2 + \frac{a}{2} \zeta^2 w^2 \Delta_M
     f + c \zeta^2 w^2\right]\!\mathrm{d} \mathrm{V_g} .
   \end{aligned} \]

Set $W = \zeta w$. Then the above identity can be rearranged as
\[ \begin{aligned}
     \int_{M} \lambda (D) \zeta^2 w^2 \V =& \int_{M} D \nabla_M w \cdot
     (\zeta \nabla_M (\zeta w) + \zeta w \nabla_M \zeta)\\
     & + \frac{a^2}{4D}  | \nabla_M f|^2 \zeta^2 w^2 -  a \zeta w
     \nabla_M (\zeta w) \cdot \nabla_M f + c \zeta^2 w^2\mathrm{d} \mathrm{V_g}\\
     =&  \int_{M} D \left| \nabla_M (\zeta w) - \frac{a}{2D} \zeta w
     \nabla_M f \right|^2\\
     & + c \zeta^2 w^2 - Dw \nabla_M \zeta \cdot \nabla_M (\zeta w) + D \zeta w
     \nabla_M w \cdot \nabla_M \zeta\mathrm{d} \mathrm{V_g}\\
      =& \int_{M} D \left| \nabla_M W - \frac{a}{2D} W \nabla_M f
     \right|^2 + c W^2 - D \int_{M} \frac{W^2}{\zeta^2}  | \nabla_M \zeta
     |^2 \mathrm{d} \mathrm{V_g}.
   \end{aligned} \]
Thus we have the following useful identity.
\begin{lemma}\label{lem:2.2}
Let \((\lambda(D), w)\) be a solution of \eqref{Ew} and \(\zeta\) be smooth. Define \(W := \zeta w\). Then
\begin{equation}
\int_M \left( D \left| \nabla_M W - \frac{a}{2D} W \nabla_M f \right|^2 + c W^2 \right) \V =\! \lambda(D) \int_M W^2 \V + D \int_M \frac{W^2}{\zeta^2} |\nabla_M \zeta|^2 \V.
\end{equation}
\end{lemma}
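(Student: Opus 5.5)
We obtain the identity by testing \eqref{Ew} against $\zeta^2 w$ and rearranging the resulting integrals into the squared ``drifted gradient'' of $W=\zeta w$. Throughout, we read the last term $\frac{W^2}{\zeta^2}|\nabla_M\zeta|^2$ as $w^2|\nabla_M\zeta|^2$, so no division by $\zeta$ actually occurs. All integrations by parts are on the closed manifold $M$, so no boundary terms appear. Elliptic regularity gives $w\in C^\infty(M)$ since $f,c$ are smooth, so every manipulation is legitimate.

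\textbf{Step 1 (testing the equation).} We multiply \eqref{Ew} by $\zeta^2 w$ and integrate. On the term $-D\int_M\zeta^2 w\,\Delta_M w$ we use the divergence theorem to get $D\int_M \nabla_M w\cdot\nabla_M(\zeta^2 w)$. This gives
\[
\lambda(D)\int_M W^2\V=\int_M\Big[D\nabla_M w\cdot\nabla_M(\zeta^2w)+\tfrac{a^2}{4D}|\nabla_M f|^2W^2+\tfrac a2 W^2\Delta_M f+cW^2\Big]\V .
\]

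\textbf{Step 2 (the gradient term).} We write $\nabla_M(\zeta^2 w)=\zeta\nabla_M W+W\nabla_M\zeta$ and $\zeta\nabla_M w=\nabla_M W-w\nabla_M\zeta$. Then
\[
D\nabla_M w\cdot\nabla_M(\zeta^2 w)=D|\nabla_M W|^2-Dw^2|\nabla_M\zeta|^2 .
\]
To check this, expand: the two cross terms $\pm D\,w\zeta\,\nabla_M w\cdot\nabla_M\zeta$ cancel, and the remaining terms combine to $D\zeta^2|\nabla_M w|^2+2D\zeta w\nabla_M w\cdot\nabla_M\zeta$. This equals $D|\nabla_M W|^2-Dw^2|\nabla_M\zeta|^2$.

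\textbf{Step 3 (the drift term).} As in the proof of Lemma \ref{lem:2.1}, applied to $W$, integration by parts gives
\[
\tfrac a2\int_M W^2\Delta_M f\V=-a\int_M W\,\nabla_M W\cdot\nabla_M f\V .
\]
Hence
\[
\int_M\Big[D|\nabla_M W|^2+\tfrac a2W^2\Delta_M f+\tfrac{a^2}{4D}|\nabla_M f|^2W^2\Big]\V=\int_M D\Big|\nabla_M W-\tfrac{a}{2D}W\nabla_M f\Big|^2\V .
\]

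\textbf{Step 4 (conclusion).} Substituting Steps 2 and 3 into Step 1 yields
\[
\lambda(D)\int_M W^2\V=\int_M\Big(D\Big|\nabla_M W-\tfrac a{2D}W\nabla_M f\Big|^2+cW^2\Big)\V-D\int_M w^2|\nabla_M\zeta|^2\V,
\]
which is exactly the claimed identity. There is no real obstacle here. The only points needing care are the bookkeeping in Step 2 and the interpretation of the $W^2/\zeta^2$ term on the zero set of $\zeta$; the latter is resolved by writing that term as $w^2|\nabla_M\zeta|^2$.
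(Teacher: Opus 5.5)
Your proof is correct and is essentially the paper's argument: test \eqref{Ew} against $\zeta^2 w$, integrate by parts, rewrite $D\nabla_M w\cdot\nabla_M(\zeta^2 w)=D|\nabla_M W|^2-Dw^2|\nabla_M\zeta|^2$, and absorb $\tfrac a2 W^2\Delta_M f$ into the completed square via $-a\,W\nabla_M W\cdot\nabla_M f$. The verification sentence in Step 2 blends two ways of checking the identity, but the identity itself holds, and reading $W^2/\zeta^2$ as $w^2$ is exactly how that term is meant.
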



\section{Proof of theorem}
Recall that \(\Sigma\) denotes the set of critical points of \(f\), i.e.,
\[
\Sigma := \{ p \in M : \nabla_M f(p) = 0 \}.
\]
Since \(M\) is closed and \(f\) is a Morse function, \(\Sigma\) is nonempty and consists of isolated points. For \(p\in\Sigma\), we adopt the geodesic normal coordinates \((U_R,\varphi)\) centered at \(p\), as constructed in the previous subsection, and write \(\hat f = f\circ\varphi^{-1}\) for the coordinate representation of \(f\).
\subsection{Upper bound estimate}
We first prove the upper bound for the limit.
\begin{proposition}\label{prop:upper} 
For the principal eigenvalue $\lambda(D)$, we have the following estimate:
\[
\limsup_{D \to 0} \lambda(D) \leq \min_{p \in \Sigma} \left\{ c(p) + \frac{a}{2} \sum_{i=1}^N \left( |\kappa_i(p)| + \kappa_i(p) \right) \right\}.
\]
\end{proposition}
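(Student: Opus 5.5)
We prove the upper bound by inserting a Gaussian-type test function, localized at a single critical point, into the Variational characterization 2, \eqref{var2}. Fix $p\in\Sigma$ and work in the geodesic normal coordinates $(U_R,\varphi)$ centered at $p$, with axes aligned with the Hessian eigenbasis, so that \eqref{tay} and \eqref{localexpansionf} hold. It suffices to show that for each such $p$,
\[
\limsup_{D\to0}\lambda(D)\le c(p)+\frac a2\sum_{i=1}^N\bigl(|\kappa_i|+\kappa_i\bigr);
\]
taking the minimum over the finite set $\Sigma$ then gives the proposition.

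\textbf{Test function.} Let $\chi$ be a smooth cutoff with $\chi\equiv1$ on $B(0,R/2)$ and $\operatorname{supp}\chi\subset B(0,R)$. In the coordinates $x=\varphi(q)$ set
\[
W_D(q)=\chi(x)\exp\Bigl(-\frac{a}{4D}\sum_{i=1}^N|\kappa_i|x_i^2\Bigr),
\]
extended by zero outside $U_R$. The motivation is that the one-dimensional model $-D w''+\bigl(\frac{a^2\kappa^2}{4D}x^2+\frac a2\kappa\bigr)w$ has ground state $e^{-a|\kappa|x^2/(4D)}$ with energy $\frac a2(|\kappa|+\kappa)$.

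\textbf{Energy computation.} We compute the quotient
\[
\frac{\displaystyle\int_M\Bigl(D\bigl|\nabla_MW_D-\tfrac{a}{2D}W_D\nabla_Mf\bigr|^2+cW_D^2\Bigr)\V}{\displaystyle\int_MW_D^2\V}.
\]
On $B(0,R/2)$, write $\nabla W_D=-\frac{a}{2D}W_D(|\kappa_i|x_i)_i$. By \eqref{localexpansionf}, $\partial_i\hat f=\kappa_ix_i+O(|x|^2)$. The integrand therefore becomes
\[
\frac{a^2}{4D}W_D^2\,g^{ij}\bigl(|\kappa_i|x_i+\kappa_ix_i+O(|x|^2)\bigr)\bigl(|\kappa_j|x_j+\kappa_jx_j+O(|x|^2)\bigr)\sqrt{\det g}.
\]
Rescale $x=\sqrt D\,y$. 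The leading term is $\frac{a^2}{4}\sum_i(|\kappa_i|+\kappa_i)^2y_i^2$ against the Gaussian weight $e^{-\frac a2\sum|\kappa_i|y_i^2}$. By \eqref{tay}, the metric errors are $O(D|y|^2)$, and the cubic Taylor error contributes $O(\sqrt D|y|^3)$ relative terms; both vanish as $D\to0$ by dominated convergence. The cutoff region $R/2\le|x|\le R$ and the term $\nabla\chi$ are $O(e^{-C/D})$ relative to the denominator, which is of order $D^{N/2}$. Hence the kinetic part of the quotient tends to
\[
\frac{\frac{a^2}{4}\sum_i(|\kappa_i|+\kappa_i)^2\int y_i^2e^{-\frac a2\sum|\kappa_j|y_j^2}\,dy}{\int e^{-\frac a2\sum|\kappa_j|y_j^2}\,dy}.
\]
Using $\int y_i^2e^{-\frac a2|\kappa_i|y_i^2}\big/\int e^{-\frac a2|\kappa_i|y_i^2}=\frac{1}{a|\kappa_i|}$ (here $\kappa_i\ne0$ by the Morse assumption), each term equals $\frac a4\frac{(|\kappa_i|+\kappa_i)^2}{|\kappa_i|}=\frac a2(|\kappa_i|+\kappa_i)$. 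For the potential term, continuity of $c$ together with the concentration of $W_D^2$ gives $\int cW_D^2/\int W_D^2\to c(p)$.

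\textbf{Main obstacle.} The delicate step is controlling the errors uniformly after rescaling. The cross terms between the leading linear part and the $O(|x|^2)$ remainder of $\nabla\hat f$ carry the prefactor $\frac{1}{D}$. After the scaling they become $\frac1D\cdot D^{1/2}|y|\cdot D|y|^2=O(\sqrt D|y|^3)$, and the metric perturbation $g^{ij}-\delta^{ij}=O(D|y|^2)$ multiplies an $O(|y|^2)$ quantity. Both bounds must be checked explicitly and shown to be integrable against the Gaussian weight, which is what makes the errors vanish in the limit.
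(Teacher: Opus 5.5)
Your proposal is correct and follows essentially the same route as the paper. Both insert a Gaussian test function $\exp\bigl(-\frac{a}{4D}\sum_i|\kappa_i|x_i^2\bigr)$ at scale $\sqrt{D}$, written in Hessian-aligned geodesic normal coordinates, into \eqref{var2}, and the explicit Gaussian second-moment computation yields $\frac a2(|\kappa_i|+\kappa_i)$. The differences are only technical. The paper truncates by subtracting the constant $e^{-\frac12(|\kappa_i|+\delta)L^2}$ with $L=D^{-1/8}$, adds a regularization $\delta>0$ to the exponents, and controls the metric through an $\varepsilon$-smallness of $g^{ij}-\delta^{ij}$ on a fixed ball. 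Your fixed smooth cutoff, the Morse condition $\kappa_i\neq0$, and the $O(D|y|^2)$ size of the metric error after rescaling make those extra parameters unnecessary.
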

\begin{proof}
For any fixed \(p^*\in\Sigma\), we shall construct a test function supported near \(p^*\). Let \((U_{p^*},\varphi)\) be the geodesic normal coordinates centered at \({p^*}\) constructed in the previous subsection with $U_{p^*}\subset U_R$. In these coordinates we have \(g_{ij}(0)=\delta_{ij}\) and \(\Gamma_{ij}^k(0)=0\), and consequently the metric expansions \eqref{tay} hold.
Thus there exists a constant $C_{p^*} > 0$, depending only on $g$ and $p^*$, such that $|g^{ij}(x) - \delta^{ij}| \le C_{p^*}|x|^2$ for $x$ sufficiently close to $0$. Therefore, for any given $\varepsilon> 0$, we may choose a sufficiently small radius $\tau = \sqrt{\varepsilon / C_{p^*}} > 0$ such that for all $x \in B(0, \tau)$, the following strict bound holds:
\[
|g^{ij}(x) - g^{ij}(0)| < \varepsilon.
\]
We take $U_p$ small enough that its image $\varphi(U_p)$ is contained in the ball $B(0, \tau)$. In this neighborhood, by \eqref{localexpansionf}, the function $\hat{f}(\cdot)=f\circ \varphi^{-1}(\cdot)$ has the following expansion near the origin:
\[
\hat{f}(x) = \hat{f}(0) + \frac{1}{2} \sum_{i=1}^N \kappa_i({p^*}) x_i^2 + O(|x|^3),
\]
where $\kappa_1({p^*}), \ldots, \kappa_N({p^*})$ are the eigenvalues of Hessian operator $H_{p^*}(f)$. Consequently, its first derivatives satisfy:
\begin{equation}\label{tayf}
\partial_{x_i}\hat{f}(x) = \kappa_i({p^*}) x_i + O(|x|^2), \quad \text{for } |x| \leq \tau.
\end{equation}
For simplicity, we write $\kappa_i$ for $\kappa_i({p^*})$ in what follows. For the sake of convenience, here and throughout, let $C$ denote a generic positive constant (depending only on $a$, $g$, ${p^*}$, $N$ and the $\kappa_i$'s) whose value may change from line to line.

Fix a small positive constant \(0<\delta < 1\) and set \( L :=D^{-\frac{1}{8}}>0\). Define on \(\R^N\) for $i=1,2,...,N$ that 
\[
p_i(x_i) = \left[ e^{-\frac{1}{2}(|\kappa_i| + \delta) x_i^2} - e^{-\frac{1}{2}(|\kappa_i| + \delta) L^2} \right]^+,\quad p(x) = \prod_{i=1}^N p_i(x_i),\quad q_i(x_i)=e^{-\frac{1}{2}(|\kappa_i| + \delta) x_i^2},
\]
and
\begin{equation} \label{Ci}
        C_i(x_i)=q_i(x_i)-p_i(x_i)=
\begin{cases}
e^{-\frac12(|\kappa_i|+\delta)L^2}, & |x_i|<L,\\[1ex]
e^{-\frac12(|\kappa_i|+\delta)x_i^2}, & |x_i|\ge L.
\end{cases}
\end{equation}
Let \(\rho = \sqrt{2D/a}\), where $a>0$ is fixed and set
\[
\zeta(x) = \frac{1}{\rho^{N/2}} p\left( \frac{x}{\rho} \right).
\]
Choose \(0<r_0<\tau\) sufficiently small so that \(B(0, r_0) \subset \varphi(U_{p^*})\) and \eqref{tay} are valid. For \(D\) sufficiently small such that \( L < \frac{r_0}{\rho \sqrt{N}}\), we have
\begin{align}
 C(\rho, \delta, L) &= \int_{B(0, r_0)} \zeta^2(x)\sqrt{\mathrm{det}g_{ij}(x)}\mathrm{d} x\nonumber\\
&= \int_{B(0, r_0)} \zeta^2(x)\mathrm{d} x+\int_{B(0, r_0)} \zeta^2(x)O(|x|^2)\mathrm{d} x\nonumber\\
 &\overset{x=\rho y}{=}\int_{\mathbb{R}^N} p^2(y) \mathrm{d}y+\rho^2\int_{\mathbb{R}^N} p^2(y) O(|y|^2)\mathrm{d}y\nonumber\\
 &=:C_1(\delta,L)+\rho^2C_2(\delta,L),
\end{align}
where
$$C_1(\delta,L)=\int_{\mathbb{R}^N} p^2(y) \mathrm{d}y,\quad C_2(\delta,L)=\int_{\mathbb{R}^N} p^2(y) O(|y|^2)\mathrm{d}y.$$
Note that 
$$C_1(\delta,L)\to {\prod_{i=1}^N\sqrt{\frac{\pi}{|\kappa_i|+\delta}}}:=C(\delta), \quad \text{as } L\to\infty.$$ 
Moreover, the second term $\rho^2C_2(\delta,L)$ satisfies the two-sided estimate 
$$-C\rho^2L^2C(\delta)\le \rho^2C_2(\delta,L)=\rho^2\int_{\mathbb{R}^N} p^2(y) O(|y|^2)\mathrm{d}y\le C\rho^2L^2C_1(\delta,L)\le C\rho^2L^2C(\delta),$$
hence
$$
C_1(\delta, L)-C D^{3/4}C(\delta)\leq C(\rho,\delta,L)\leq C_1(\delta, L)+C D^{3/4}C(\delta).
$$
As $D\to0$ (equivalently, $L\to \infty$ and $\rho\to0$), we have $C(\rho,\delta,L)\to C(\delta)$. Therefore, for sufficiently small $D$, the following holds:
$$0<\frac{1}{2}C(\delta)<C(\rho,\delta,L)<\frac{3}{2}C(\delta).$$


Denote $\hat{w}(x)=\frac{\zeta(x)}{\sqrt{C(\rho,\delta,L)}}$. Then $$\int_M w^2 \V=\int_{\varphi(U_p)}\hat{w}^2(x)\sqrt{\mathrm{det}g_{ij}(x)}\mathrm{d} x=1.$$  For brevity, we write $\sqrt{g(x)}:=\sqrt{\mathrm{det}g_{ij}(x)}$ in what follows. Using the variational formula \eqref{var2}, we obtain
\begin{align}\label{I:upe}
  \lambda(D)\leq & \int_M  \left(D|\nabla_{M} w-\frac{a}{2 D} w\nabla_{M} f|^2 +c |w|^2 \right)\mathrm{d} \mathrm{V_g}\nonumber\\
 \leq &\int_{\varphi(U_p)}  D g^{ij}(x)(\frac{\partial \hat{w}}{\partial x_i} -\frac{a}{2 D} \hat{w}\frac{\partial \hat{f}}{\partial x_i} )(\frac{\partial \hat{w}}{\partial x_j} -\frac{a}{2 D} \hat{w}\frac{\partial \hat{f}}{\partial x_j} )
 \sqrt{g(x)}\mathrm{d} x\nonumber\\
 &+\int_{\varphi(U_p)} \hat{c}(x) \hat{w}^2(x)
 \sqrt{g(x)}\mathrm{d} x
 \nonumber\\
 = &   \int_{\varphi(U_p)} D g^{i j}
  (0) \sqrt{g(0)}  \left( \frac{\partial \hat{w}}{\partial x_i} -
  \frac{a}{2D} \hat{w} \frac{\partial \hat{f}}{\partial x_i} \right)  \left(
  \frac{\partial \hat{w}}{\partial x_j} - \frac{a}{2D} \hat{w} \frac{\partial
  \hat{f}}{\partial x_j} \right)  \mathrm{d} x  \nonumber\\
  & + \int_{\varphi(U_p)} D \sqrt{g(0)}  (g^{i j} (x) - g^{i j} (0)) 
  \left( \frac{\partial \hat{w}}{\partial x_i} - \frac{a}{2D} \hat{w} \frac{\partial
  \hat{f}}{\partial x_i} \right)  \left( \frac{\partial \hat{w}}{\partial x_j} -
  \frac{a}{2D} \hat{w} \frac{\partial \hat{f}}{\partial x_j} \right)  \mathrm{d} x
  \nonumber\\
  & + \int_{\varphi(U_p)} D \left( \sqrt{g(x)}  - \sqrt{g(0)}  \right) g^{i j} (x) \left( \frac{\partial \hat{w}}{\partial x_i} -
  \frac{a}{2D} \hat{w} \frac{\partial \hat{f}}{\partial x_i} \right)  \left(
  \frac{\partial \hat{w}}{\partial x_j} - \frac{a}{2D} \hat{w} \frac{\partial
  \hat{f}}{\partial x_j} \right)  \mathrm{d} x \nonumber\\
   & +\int_{\varphi(U_p)} \hat{c}(x) \hat{w}^2(x)
 \left(\sqrt{g(x)} -\sqrt{g(0)} \right)\mathrm{d} x
 +\int_{\varphi(U_p)} \hat{c}(x) \hat{w}^2(x)
 \sqrt{g(0)} \mathrm{d} x
\nonumber\\
  := & L_1 + L_2 + L_3+L_4+L_5.
\end{align}

Next, we estimate the upper bound of each term, $L_1,L_2,L_3,L_4,L_5$, respectively. 

\textbf{Estimates for $L_2,L_3$:} Let $A(x) = (a^{ij}(x))_{N \times N}$ with $a^{ij}(x) = g^{ij}(x) - g^{ij}(0)$. Recall that $g^{ij}(x) = \delta^{ij} + O(|x|^2)$ on $\varphi(U_{p^*})$, and there exists a positive constant $C_{p^*}$ such that $\forall i,j=1...n.$, $
|a^{ij}(x)| \le C_{p^*}|x|^2 \le C_{p^*}\tau^2$ for all $x \in \varphi(U_{p^*}) \subset B(0, \tau)$. Consequently, for any $\xi = (\xi_1, \ldots, \xi_N) \in \mathbb{R}^N$, 
\begin{align*}
\left| \sum_{i,j=1}^N a^{ij}(x) \xi_i \xi_j \right| \le C_{p^*}\tau^2 \sum_{i,j=1}^N |\xi_i| |\xi_j| \le \frac{C_{p^*}\tau^2}{2}  \sum_{i,j=1}^N (\xi_i^2 + \xi_j^2)  = C_{p^*} N \tau^2 |\xi|^2.
\end{align*}
Recall the definition of $L_1$ and the fact $g^{ij}(0)=\delta^{ij}$. Applying this with $\xi_i = \frac{\partial \hat{w}}{\partial x_i} - \frac{a}{2D} \hat{w} \frac{\partial \hat{f}}{\partial x_i}$, we get
\begin{align*}
|L_2| 
\le C_p N \tau^2 \int_{\varphi(U_p)} D \sum_{i=1}^N \left( \frac{\partial \hat{w}}{\partial x_i} - \frac{a}{2D} \hat{w} \frac{\partial \hat{f}}{\partial x_i} \right)^2 \mathrm{d}x = C_{p^*} N \tau^2 L_1.
\end{align*}
Since $\tau=\sqrt{\varepsilon / C_p}$,  we obtain
\[
|L_2| \le C\varepsilon L_1.
\]
 Similarly, $|L_3|\leqslant
C\varepsilon L_1$.

\textbf{Estimates for $L_4$:}  Recall that $c_*>0$, which implies that $\hat{c}>0$. Using the expansion $\sqrt{ g(x)} = \sqrt{g(0)} + O(|x|^2)$ and the fact that $\tau^2=\varepsilon/C_p$, we have 
\begin{equation*}
    |L_4|\le\int_{\varphi(U_p)} \hat{c}(x) \hat{w}^2(x)
 \left|\left(\sqrt{g(x)}-\sqrt{g(0)}\right)\right|\mathrm{d} x\le C\varepsilon L_5.
\end{equation*}

Combining this with the estimates for $L_2$ and $L_3$, we have controlled all error terms arising from the metric expansion by their corresponding principal terms. Substituting these bounds into \eqref{I:upe} yields
\begin{align}\label{UBL}
\lambda(D) &\le L_1 + L_2 + L_3 + L_4 + L_5 \le (1 + C\varepsilon) (L_1 + L_5).
\end{align}
Therefore, to establish the upper bound for $\lambda(D)$, we reduce to estimating the principal terms $L_1$ and $L_5$.

\textbf{Estimates for $L_1+L_5$:} 
By the definition of $\hat{w}(x)$, we calculate that
\begin{align*}
L_1+L_5=&
\int_{\varphi(U_p)} D  \left|\nabla \hat{w} -
  \frac{a}{2D} \hat{w} \nabla \hat{f} \right|^2\mathrm{d} x
  +\int_{\varphi(U_p)} \hat{c}(x) \hat{w}^2(x) \mathrm{d} x\\
  =&
\int_{B(0,\rho L)} D  \left|\nabla \hat{w} -
  \frac{a}{2D} \hat{w} \nabla \hat{f} \right|^2\mathrm{d} x
  +\int_{B(0,\rho L)} \hat{c}(x) \hat{w}^2(x) \mathrm{d} x
  \\=&
  \int_{\mathbb{R}^N} \left[ D  \left|\nabla \ln(\hat{w}(x)) -
  \frac{a}{2D} \nabla \hat{f}(x) \right|^2+\hat{c}(x) \right] \hat{w}^2(x) \mathrm{d}x
  \\=&
 \frac{1}{C(\rho,\delta,L)} \int_{\mathbb{R}^N} \left[ D  \left|\nabla \ln(\hat{w}(x)) -
  \frac{a}{2D} \nabla \hat{f}(x) \right|^2+\hat{c}(x) \right] \zeta^2(x) \mathrm{d}x
 \end{align*}
Thanks to \eqref{tayf} and the fact $\nabla \ln\hat{w}(x)=\nabla \ln \zeta(x)$, by the definition of $\zeta(x)$ and \(\rho = \sqrt{2D/a}\), we have
\begin{align*}
\!\!&L_1+L_5\\
=\!&
 \int_{\mathbb{R}^N} \left[D\sum_{i=1}^N\left(\frac{(|\kappa_i|+\delta)x_ie^{-\frac{(|\kappa_i|+\delta)x_i^2}{2\rho^2}}}{\rho^2 p_i(\frac{x_i}{\rho})}
 +\frac{a}{2D}\kappa_ix_i
 +\frac{a}{2D}O(|x|^2)\right)^2+\hat{c}(x) \right]  
 \frac{\zeta^2(x)}{C(\rho,\delta,L)}\mathrm{d}x
  \\= \!&
\int_{\mathbb{R}^N} \left[ D\sum_{i=1}^N\left(\frac{(|\kappa_i|+\delta)y_ie^{-\frac{(|\kappa_i|+\delta)y_i^2}{2}}}
 {\rho p_i(y_i)}+\frac{a \rho}{2D}\kappa_iy_i+\frac{a\rho^2}{2D} O(|y|^2)\right)^2
 +\hat{c}(\rho y) \right]
  \frac{p^2(y)}{C(\rho,\delta,L)} \mathrm{d}y
  \\= \!&
 \int_{\mathbb{R}^N} 
 \left[\frac{a}{2}\sum_{i=1}^N\left(\frac{(|\kappa_i|+\delta)y_i
 q_i(y_i)}
 { p_i(y_i)}+ \kappa_iy_i+ \rho O(|y|^2)\right)^2
 +\hat{c}(\rho y) \right] 
  \frac{p^2(y)}{C(\rho,\delta,L)} \mathrm{d}y
   \\= \!&
 \int_{\mathbb{R}^N} 
 \left[\frac{a}{2}\sum_{i=1}^N\left(\frac{(|\kappa_i|+\delta)y_i C_i(y_i)}
 { p_i(y_i)}+(|\kappa_i|+\delta+\kappa_i)y_i+ \rho O(|y|^2)\right)^2
 \!\!+\hat{c}(\rho y) \right] 
  \frac{p^2(y)}{C(\rho,\delta,L)} \mathrm{d}y
 \\ =\! &
 \int_{\mathbb{R}^N}\Bigg\{
\frac{a}{2}\sum_{i=1}^N
\Bigg[
\left( |\kappa_i| + \kappa_i + \delta \right)^2 y_i^2
+
\frac{C_i^2 y_i^2}{p_i(y_i)^2} (|\kappa_i| + \delta)^2
+
\frac{2 C_i (|\kappa_i| + \kappa_i + \delta)(|\kappa_i|+\delta)y_i^2}{p_i(y_i)}
\\
&+\! \rho^2 O(|y|^4)+(|\kappa_i|\! +\! \kappa_i\! +\! \delta)y_i\rho  O(|y|^2)\!
\!+\!\!\frac{(|\kappa_i|\!+\!\delta)y_iC_i(y_i)}
 { p_i(y_i)} \rho  O(|y|^2)\!\Bigg]\!\!
+\! \hat{c}(\rho y)\!
\Bigg\}\!\frac{p^2(y)}{C(\rho,\delta,L)}\mathrm{d}y
\\:= &J_1+J_2+J_3+J_4+J_5,
\end{align*}
where
\begin{align*}
J_1\!&=\!\int_{\mathbb{R}^N}
\frac{a}{2}\sum_{i=1}^N
\left( |\kappa_i| + \kappa_i + \delta \right)^2 y_i^2
\frac{p^2(y)}{C(\rho,\delta,L)}\mathrm{d}y\\
J_2\!&=\!\int_{\mathbb{R}^N}
\frac{a}{2}\sum_{i=1}^N
\frac{C_i^2 y_i^2}{p_i(y_i)^2} (|\kappa_i| + \delta)^2
\frac{p^2(y)}{C(\rho,\delta,L)}\mathrm{d}y\\
J_3\!&=\!\int_{\mathbb{R}^N}
a\sum_{i=1}^N
\frac{ C_i (|\kappa_i| + \kappa_i + \delta)(|\kappa_i|+\delta)y_i^2}{p_i(y_i)}
\frac{p^2(y)}{C(\rho,\delta,L)}\mathrm{d}y\\
J_4\!&=\!\int_{\mathbb{R}^N}\!
\sum_{i=1}^N\! \Bigg[\!
\rho^2 O(|y|^4)\!+\!(|\kappa_i|\! +\! \kappa_i\! +\! \delta)y_i\rho  O(|y|^2)
\!+\!\frac{(|\kappa_i|+\delta)y_iC_i(y_i)}
 { p_i(y_i)} \rho  O(|y|^2)\!\Bigg]\!
\frac{p^2(y)}{C(\rho,\delta,L)}\mathrm{d}y\\
J_5\!&=\!\int_{\mathbb{R}^N} \hat{c}(\rho y)
\frac{p^2(y)}{C(\rho,\delta,L)}\mathrm{d}y
\end{align*}

\textbf{Estimate of $J_1$.}
From the definition of $p(y)$ and the bounds on $C(\rho,\delta,L)$, for sufficiently small $D$, we have  
\begin{align*}
J_1=&
\frac{a}{2C(\rho,\delta,L)} \sum_{i=1}^N \int_{\mathbb{R}^N} 
(|\kappa_i|+\kappa_i+\delta)^2 y_i^2 p^2(y) \mathrm{d}y\nonumber\\
\leqslant & 
\frac{a}{2C(\rho,\delta,L)} \sum_{i=1}^N \int_{\mathbb{R}^N} 
  e^{- \sum_{j = 1}^N (|k_j | + \delta) y_j^2}  (|k_i | +
  \delta + k_i)^2 y_i^2 \textrm{ d} y \nonumber\\
  ={}&
\frac{a}{2C(\rho,\delta,L)}
\sum_{i=1}^N
(|\kappa_i|+\delta+\kappa_i)^2
\left(
\int_{\mathbb R}
y_i^2e^{-(|\kappa_i|+\delta)y_i^2}\,\mathrm{d}y_i
\right)
\prod_{\substack{ j\neq i}}^N
\left(
\int_{\mathbb R}
e^{-(|\kappa_j|+\delta)y_j^2}\,\mathrm{d}y_j
\right)
\nonumber\\
  = & 
\frac{a}{2C(\rho,\delta,L)} \left( \sum_{i= 1}^N \frac{(|\kappa_i | +
  \delta + \kappa_i)^2}{2 (|k_i | + \delta)} \right) \left( \overset{N}{\underset{j
  = 1}{\prod}} \sqrt{\frac{\pi}{|\kappa_j| + \delta}}  \right) \nonumber\\
  =&
  \frac{a}{2}\frac{C(\delta)}{C(\rho,\delta,L)} \left( \sum_{i = 1}^N \frac{(|\kappa_i | +
  \delta + \kappa_i)^2}{2 (|\kappa_i | + \delta)} \right) 
\end{align*}
As $D\to 0$, since $C(\rho,\delta,L)\to C(\delta)$, we have
\begin{equation}\label{J1}
J_1\to \frac{a}{2}\sum_{i = 1}^N \frac{(|\kappa_i | +
  \delta + \kappa_i)^2}{2 (|\kappa_i| + \delta)}.    
\end{equation}

\textbf{Estimate of $J_2$.} 
Thanks to \eqref{Ci}, we have the following inequalities
\begin{align*}
&\int_{\mathbb{R}}
{(|\kappa_i| + \delta)^2 C_i^2  y_i^2}\mathrm{d}y_i\\
=&\int_{|y_i|<L}
{(|\kappa_i| + \delta)^2   e^{-(|\kappa_i|+\delta)L^2}y_i^2}\mathrm{d}y_i+\int_{|y_i|\ge L}
{(|\kappa_i| + \delta)^2 e^{-(|\kappa_i|+\delta)y_i^2}  y_i^2}\mathrm{d}y_i    \\
\le& (|\kappa_i| + \delta)^2   L^3e^{-(|\kappa_i|+\delta)L^2}+\int_{|y_i|\ge L}
{(|\kappa_i| + \delta)^2 e^{-(|\kappa_i|+\delta)y_i^2}  y_i^2}\mathrm{d}y_i  \\\le& C_{\kappa}   L^3e^{-(|\kappa_i|+\delta)L^2}+\int_{|y_i|\ge L}
{(|\kappa_i| + \delta)^2 e^{-(|\kappa_i|+\delta)y_i^2}  y_i^2}\mathrm{d}y_i. 
\end{align*}
where $C_{\kappa}>0$ is a constant depending on the $\kappa_i$'s.

We now turn to $J_2$. Since $p_j^2(y_j) \le e^{-(|\kappa_j|+\delta)y_j^2}$ for all $j$, it follows from the above estimate that
\begin{align*}
J_2&=\int_{\mathbb{R}^N}
\frac{a}{2}\sum_{i=1}^N
\frac{C_i^2 y_i^2}{p_i(y_i)^2} (|\kappa_i| + \delta)^2
\frac{p^2(y)}{C(\rho,\delta,L)}\mathrm{d}y\\
&=\frac{a}{2C(\rho,\delta,L)}\sum_{i=1}^N\left[
\int_{\mathbb{R}}
{(|\kappa_i| + \delta)^2 C_i^2  y_i^2}\mathrm{d}y_i
\left(\prod_{j\neq i}\int_{\mathbb{R}} {p_j^2(y_j)}\mathrm{d}y_j \right)\right]\\
&\leq
\frac{C }{C(\rho,\delta,L)}
\sum_{i=1}^N   \left[
 \left(\prod_{j\neq i}\int_{\mathbb{R}} e^{-(|\kappa_j|+\delta)y_j^2}\mathrm{d}y_j\right)\int_{\mathbb{R}}
{(|\kappa_i| + \delta)^2 C_i^2  y_i^2}\mathrm{d}y_i\right]
 \nonumber\\
&\le \frac{C }{C(\rho,\delta,L)}
\sum_{i=1}^N   
 \prod_{j \neq i} \sqrt{\frac{\pi}{|\kappa_j|+\delta}}\left(    L^3e^{-(|\kappa_i|+\delta)L^2}+\int_{|y_i|\ge L}
{(|\kappa_i| + \delta)^2 e^{-(|\kappa_i|+\delta)y_i^2}  y_i^2}\mathrm{d}y_i     \right)
 \nonumber\\
 &\le \frac{C }{C(\rho,\delta,L)}\sum_{i=1}^N   
 \left(     L^3e^{-(|\kappa_i|+\delta)L^2}+\int_{|y_i|\ge L}
{ e^{-(|\kappa_i|+\delta)y_i^2}  y_i^2}\mathrm{d}y_i     \right),
\end{align*}
where $C>0$ is a constant depending on the $\kappa_i$'s and $N$, and we have absorbed the factor $(|\kappa_i|+\delta)^2$ into $C$ using the fact that $0<\delta<1$ is bounded.

Since 
\[
\int_{|y_i|\ge L} e^{-(|\kappa_i|+\delta)y_i^2} y_i^2\,\mathrm{d}y_i \longrightarrow 0
\qquad \text{as } L\to\infty.
\]
Consequently, as \(D\to 0\) (and hence \(L\to 0\)), the bound 
$
\frac12 C(\delta) < C(\rho,\delta,L) < \frac32 C(\delta)
$
from the preceding estimate ensures that \(1/C(\rho,\delta,L)\) is uniformly bounded, which yields
\[
J_2 \longrightarrow 0 \qquad \text{as } D\to 0 \text{ (equivalently }L\to 0\text{)}.
\]

\textbf{Estimate of $J_3$.} 
Similarly, using the bound $C_i(y_i)\le e^{-\frac12 (|\kappa_i|+\delta)L^2 }$  for any $y_i\in\mathbb{R}$, atogether with $C(\rho,\delta,L)>\frac12 C(\delta)$ for sufficiently small $D$ and $0<\delta<1$, we obtain, for small $D$,
\begin{align}
    J_3&=\int_{\mathbb{R}^N}
a\sum_{i=1}^N
\frac{ C_i(y_i) (|\kappa_i| + \kappa_i + \delta)(|\kappa_i|+\delta)y_i^2}{p_i(y_i)}
\frac{p^2(y)}{C(\rho,\delta,L)}\mathrm{d}y\nonumber\\
&=\sum_{i=1}^N\frac{a(|\kappa_i| + \kappa_i + \delta)(|\kappa_i|+\delta)}{C(\rho,\delta,L)}\int_{\mathbb{R}^N}
\frac{ C_i(y_i) y_i^2}{p_i(y_i)}
 p^2(y)
 \mathrm{d}y\nonumber\\
 &\le\sum_{i=1}^N\frac{a(|\kappa_i| + \kappa_i + \delta)(|\kappa_i|+\delta)e^{-\frac12 (|\kappa_i|+\delta)L^2 }}{C(\rho,\delta,L)}\int_{\mathbb{R}^N}
\frac{ y_i^2}{p_i(y_i)}
 p^2(y)
 \mathrm{d}y\nonumber\\
 &=\sum_{i=1}^N\frac{a(|\kappa_i| + \kappa_i + \delta)(|\kappa_i|+\delta)e^{-\frac12 (|\kappa_i|+\delta)L^2 }}{C(\rho,\delta,L)}\left(\int_{\mathbb{R}} y_i^2 p_i(y_i)\mathrm{d}y_i
\prod_{j\neq i}\int_{\mathbb{R}} p_j^2(y_j) \mathrm{d}y_j\right)\nonumber\\
& =\sum_{i=1}^N\left(
\frac{a (|\kappa_i|+\kappa_i+\delta)(|\kappa_i|+\delta) }{C(\rho,\delta,L)}
\frac{1}{|\kappa_i|+\delta}\sqrt{\frac{2\pi}{|\kappa_i|+\delta}}\cdot\prod_{j\neq i}\sqrt{\frac{\pi}{|\kappa_j|+\delta}} \right)
e^{-\frac{1}{2}(|\kappa_i|+\delta)L^2}\nonumber\\
& \leq
\frac{C}{C(\delta)}\sum_{i=1}^N
e^{-\frac{1}{2}(|\kappa_i|+\delta)L^2},
\end{align}
where $C>0$ is a constant depending on $N$, $a$, $\delta$, and the $\kappa_i$'s. Since $C(\delta)$ is bounded, we obtain $J_3 \longrightarrow 0$ as $D\to0$ (equivalently $L\to\infty$).

\textbf{Estimate of $J_4$.}
Using $\rho=\sqrt{2D/a}$ and $L=D^{-1/8}$, we get
\begin{align*}
    |J_4|    
&\le \frac{C(L^4 D + L^3 D^{\frac{1}{2}}) }{C(\rho,\delta,L)}\sum_{i=1}^N \left(\int_{\mathbb{R}^N}p^2(y)\mathrm{d}y+\int_{\mathbb{R}^N}\frac{p^2(y)}{p_i(y_i)}\mathrm{d}y\right)\nonumber\\
&= C (D^\frac{1}{2} + D^\frac{1}{8}) \frac{C_1(\delta,L)}{C(\rho,\delta,L)} 
 + \frac{C D^\frac{1}{8}}{C(\rho,\delta,L)} \sum_{i=1}^N \left( \int_{\mathbb{R}} p_i(y_i) \, dy_i \right) \left( \prod_{j \neq i} \int_{\mathbb{R}} p_j^2(y_j) \, dy_j \right) \nonumber\\
&\le C D^\frac{1}{8},
\end{align*}
where the last inequality follows from the estimates for $C_1(\delta,L)$ and $C(\rho,\delta,L)$ above, together with the limit
\[
\left( \int_{\mathbb{R}} p_i(y_i) \, dy_i \right) \left( \prod_{j \neq i} \int_{\mathbb{R}} p_j^2(y_j) \, dy_j \right) \to \sqrt{2} \, C(\delta) \quad \text{as } L \to \infty.
\]
Consequently, it follows from the preceding estimate that $J_4\to 0$ as $D\to 0$.

\textbf{Estimate of $J_5$.}
Recall that $\varphi({p^*})=0$, so $\hat{c}(0)=c(\varphi^{-1}(0))=c(p^*)$. From the preceding construction, we have chosen \(0<r_0<\tau\) with \(B(0,r_0)\subset\varphi(U_{p^*})\), and for sufficiently small \(D>0\), \(\sqrt N\rho L<r_0\), so that \(\rho\operatorname{supp}(p(y))\subset B(0,r_0)\). Consequently, for each \(y\in\operatorname{supp}(p(y))\), Taylor's theorem gives
$\hat{c}(\rho y)=\hat{c}(0)+\rho\nabla\hat{c}(0)\cdot y+O(\rho^2|y|^2)$. Since \(p^2(y)\) is even in each variable, we have
\[
\int_{\mathbb{R}^N}
\nabla\hat{c}(0)\cdot y\,p^2(y)\,dy
=0.
\]
In fact,  for every \(i=1,\ldots,N\), we obtain
\begin{align*}
\int_{\mathbb{R}^N}y_i p^2(y)\,dy
=
\left(
\int_{-L}^{L}y_i p_i^2(y_i)\,dy_i
\right)
\prod_{\substack{j=1\\ j\neq i}}^N
\left(
\int_{-L}^{L}p_j^2(y_j)\,dy_j
\right)=0,
\end{align*}
by the oddness of \(y_i p_i^2(y_i)\).
Therefore, a direct computation yields
\begin{align*}
J_5
&= \int_{\mathbb{R}^N} \hat{c}(\rho y) \frac{p^2(y)}{C(\rho,\delta,L)} \, dy \\
&= \int_{\mathbb{R}^N} \left( \hat{c}(0) + \rho\nabla\hat{c}(0)\cdot y+O(\rho^2 |y|^2) \right) \frac{p^2(y)}{C(\rho,\delta,L)} \, dy \\
&= \hat{c}(0) \frac{C_1(\delta,L)}{C(\rho,\delta,L)} + O(\rho^2 L^2) \\
&= \hat{c}(0) \frac{C_1(\delta,L)}{C(\rho,\delta,L)} + O(D^{3/4}) \longrightarrow \hat{c}(0) = c(p^*) \quad \text{as } D \to 0,
\end{align*}
where we use the fact that  $p^2(y)$ is supported in $|y|\le L\sqrt N$ and the convergence follows from   \(C_1(\delta,L)/C(\rho,\delta,L)\to 1\)   as \(D\to0\).

Finally, sending $D \to 0$ (i.e.,, $L \to \infty$ and $\rho \to 0$), and then taking $\delta \to 0$, the estimates for $J_1$ through $J_5$, in particular \eqref{J1}, yield
\[
L_1 + L_5 \le J_1 + J_2 + J_3 + J_4 + J_5
\le c(p^*) + \frac{a}{2} \sum_{i=1}^N \bigl(|\kappa_i(p^*)| + \kappa_i(p^*)\bigr)
\]
for each $p^* \in \Sigma$. Combining this with \eqref{UBL} and sending $\varepsilon \to 0$, we obtain
\[
\limsup_{D \to 0} \lambda(D)
\le \min_{p \in \Sigma} \left\{ c(p) + \frac{a}{2} \sum_{i=1}^N \bigl(|\kappa_i(p)| + \kappa_i(p)\bigr) \right\},
\]
which completes the proof.
\end{proof}

\subsection{Lower bound estimate}
 We first give the local  lower bound estimate for the functional $E[W]$.
\begin{lemma}\label{lem:2.3}
Let \(p_0 \in M\) and let \(U_R:=B_g(p_0,R)\subset M\) be a geodesic ball contained in a normal neighborhood of \(p_0\), with \(R>0\) sufficiently small. If \(W\in H_0^1(U_R)\) (i.e.,  $W=0$ outside $U_R$), then there exists a constant \(C>0\), depending only on \(f\), \(g\), \(p_0\), and \(N\), such that
\begin{equation}\label{I:llb}
E(W) \geq \frac{a}{2} \left( \sum_{i=1}^N \left( |\kappa_i(p_0)| + \kappa_i(p_0) \right) - C R \right) \int_M W^2 \V.
\end{equation}
\end{lemma}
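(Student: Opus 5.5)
The plan is to localize the proof of Lemma \ref{lem:2.1} near $p_0$, but to replace the crude step $D|\nabla_M W|^2\ge 0$ by a harmonic-oscillator (Heisenberg-type) inequality. That inequality recovers the missing term $\frac a2\sum_i|\kappa_i(p_0)|$; the term $\frac a2\sum_i\kappa_i(p_0)$ comes from $\frac a2\Delta_M f$.

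The main case is $p_0\in\Sigma$. Work in the geodesic normal coordinates $x=\varphi(p)$ of Section \ref{pre}, whose axes are aligned with the Hessian eigenbasis at $p_0$, so that (\ref{tay}), (\ref{localexpansionf}) and (\ref{tayf}) hold on $B(0,R)$.

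\textbf{Step 1: the starting identity.} From the proof of Lemma \ref{lem:2.1}, which uses no sign information until the last line,
\[
E(W)=\int_M\Big[D|\nabla_M W|^2+\frac{a^2}{4D}|\nabla_M f|^2W^2+\frac a2\,\Delta_M f\,W^2\Big]\V .
\]
In coordinates, $\Delta_M f=g^{ij}\partial_{ij}\hat f-g^{ij}\Gamma^k_{ij}\partial_k\hat f$. Since $\Gamma(0)=0$ and $\nabla\hat f(0)=0$, this gives
\[
\Delta_M f=\sum_i\kappa_i(p_0)+O(|x|),
\]
so the last term is at least $\frac a2\big(\sum_i\kappa_i(p_0)-CR\big)\int W^2\V$, because $W$ is supported in $U_R$.

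\textbf{Step 2: the Heisenberg-type inequality.} Introduce the vector field $X:=-\sum_i|\kappa_i(p_0)|\,x_i\,\partial_{x_i}$ on $U_R$. For a parameter $\theta\in(0,1]$, expand
\[
0\le\int_M D\Big|\nabla_M W-\frac{a\theta}{2D}WX\Big|^2\V
\]
and integrate $-\frac{a\theta}{2}\int\langle\nabla_M W^2,X\rangle\V=\frac{a\theta}{2}\int W^2\dive_g X\,\V$ by parts. This is legitimate because $W\in H^1_0(U_R)$. It yields
\[
D\int|\nabla_M W|^2+\frac{a^2\theta^2}{4D}\int|X|_g^2W^2\ge-\frac{a\theta}{2}\int W^2\dive_g X .
\]
Since $\dive_g X=\frac1{\sqrt g}\partial_i(\sqrt g X^i)$ and $\partial\sqrt g=O(|x|)$, we get
\[
-\dive_g X=\sum_i|\kappa_i(p_0)|+O(|x|)\ge\sum_i|\kappa_i(p_0)|-CR .
\]

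\textbf{Step 3: the main obstacle.} The two potential terms must be compared with no loss carrying the large factor $a^2/(4D)$. Additive errors are useless here, since an $O(|x|^3)$ error multiplied by $1/D$ is not small. This is where the Morse hypothesis enters. Using (\ref{tayf}) and $g^{ij}=\delta^{ij}+O(|x|^2)$,
\[
|\nabla_M f|_g^2=\sum_i\kappa_i^2x_i^2+O(|x|^3),\qquad |X|_g^2=\sum_i\kappa_i^2x_i^2+O(|x|^4).
\]
All $\kappa_i\neq0$, so $\sum_i\kappa_i^2x_i^2\ge\kappa_{\min}^2|x|^2$ and the errors are relatively small:
\[
|\nabla_M f|_g^2\ge(1-CR)|X|_g^2\quad\text{on }U_R .
\]
Choose $\theta=\sqrt{1-CR}$. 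Then $\frac{a^2}{4D}\int|\nabla_M f|^2W^2\ge\frac{a^2\theta^2}{4D}\int|X|^2W^2$, and the cross term is handled multiplicatively. Combining with Step 2, the first two terms of the identity are at least
\[
\frac a2\sqrt{1-CR}\Big(\sum_i|\kappa_i(p_0)|-CR\Big)\int W^2\V\ \ge\ \frac a2\Big(\sum_i|\kappa_i(p_0)|-CR\Big)\int W^2\V .
\]
Adding the bound from Step 1 gives (\ref{I:llb}), with $C$ depending only on $f$, $g$, $p_0$ and $N$ through the Taylor remainders and $\min_i|\kappa_i(p_0)|$.

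\textbf{Step 4: the case $p_0\notin\Sigma$.} For small $R$ we have $|\nabla_M f|\ge c_0>0$ on $U_R$. Dropping $D|\nabla_M W|^2$, the identity gives
\[
E(W)\ge\Big(\frac{a^2c_0^2}{4D}-\frac a2\|\Delta_M f\|_\infty\Big)\int W^2\V ,
\]
which exceeds the right-hand side of (\ref{I:llb}) once $D$ is small. So in this case the estimate should be read for $D\le D_0(p_0)$, which suffices for the lower-bound argument as $D\to0$. Alternatively, one can simply restrict the lemma to $p_0\in\Sigma$, which is the case actually needed.
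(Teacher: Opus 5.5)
Your argument is correct for $p_0\in\Sigma$, the only case the paper uses, but it follows a different route from the paper's.

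\textbf{The paper's route.} It first flattens the metric. It writes $E(W)=I_1+I_2+I_3$, bounds the two metric-error terms multiplicatively by $CR^2 I_3$, and then applies a Euclidean lemma of Peng--Zhang--Zhou to the flat functional $I_3$. That Euclidean estimate can be obtained one coordinate at a time: expanding both squares and subtracting gives
\[
\int D\Big(\partial_i\hat W-\frac{a}{2D}\hat W\partial_i\hat f\Big)^2dx=\int D\Big(\partial_i\hat W+\frac{a}{2D}\hat W\partial_i\hat f\Big)^2dx+a\int\hat W^2\partial_{ii}\hat f\,dx\ \ge\ \max\Big\{0,\ a\int\hat W^2\partial_{ii}\hat f\,dx\Big\}.
\]
With $\partial_{ii}\hat f=\kappa_i+O(R)$ on the support, this yields the quoted bound $\frac a2\bigl[\sum_i(|\kappa_i|+\kappa_i)-2NR\|D^3\hat f\|_\infty\bigr]$.

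\textbf{Your route.} You stay intrinsic. The $\frac a2\sum_i\kappa_i$ part comes from $\frac a2\Delta_M f$, and the $\frac a2\sum_i|\kappa_i|$ part comes from a ground-state inequality with the model field $X$. Metric errors enter only through $\operatorname{div}_g X$, $|X|_g$, $|\nabla_M f|_g$ and $\Delta_M f$. The proof is self-contained, with no imported Euclidean lemma and no separate $I_1,I_2$ bookkeeping.

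\textbf{What your route costs.} Your Step 3 needs the comparison $|\nabla_M f|_g^2\ge(1-CR)|X|_g^2$. This requires every $\kappa_i(p_0)\neq0$ and makes $C$ depend on $\min_i|\kappa_i(p_0)|$. The coordinatewise identity never compares $|\nabla f|^2$ with a model potential: the $\frac{a^2}{4D}$ term cancels between the two squares and is then discarded inside a nonnegative square. So the paper's route avoids the obstacle you identify and needs no nondegeneracy. It also gives the lemma as stated, for every $p_0\in M$ and every $D>0$, since in normal coordinates $\partial_{ij}\hat f(0)$ is the Riemannian Hessian even at noncritical points.

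\textbf{The noncritical case.} Your Step 4 handles $p_0\notin\Sigma$ only for $D\le D_0(p_0,R)$, which is weaker than the statement as written. As you note, this does not matter for Proposition \ref{prop:lower}.
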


\begin{proof}
Let \((U_R,\varphi)\) be the geodesic normal coordinate chart centered at \(p_0\), as constructed in Section \ref{pre}, and write \(\hat f:=f\circ\varphi^{-1}\). Recall that \(\kappa_i(p_0)\), \(i=1,\dots,N\), denote the eigenvalues of the Hessian operator \(H_{p_0}(f)\); for simplicity we write \(\kappa_i:=\kappa_i(p_0)\) in the following. In these coordinates, \(\varphi(p_0)=0\), and we have the standard identities
\[
g_{ij}(0)=\delta_{ij},\qquad \partial_k g_{ij}(0)=0,\qquad \sqrt{g(0)}=1,
\]
as well as
\begin{equation}\label{hessij-local}
\frac{\partial^2 \hat f}{\partial x_i \partial x_j}(0)
=
H_{p_0}(f)(e_i,e_j)
=
\kappa_i \delta_{ij}.
\end{equation}

Now we express \(E(W)\) in local coordinates and denote $\hat W:=W\circ\varphi^{-1}$
\begin{align*}
 & E (W) \\
    = & 
\int_{U_R} D \left| \nabla_M W - \frac{a}{2D} W \nabla_M f
  \right|^2 \V  \nonumber\\
 =& \int_{\varphi(U_R)}  D g^{ij}(x)(\frac{\partial \hat{W}}{\partial x_i} -\frac{a}{2 D} \hat{W}\frac{\partial \hat{f}}{\partial x_i} )(\frac{\partial \hat{W}}{\partial x_j} -\frac{a}{2 D} \hat{W}\frac{\partial \hat{f}}{\partial x_j} )
 \sqrt{g(x)}\mathrm{d} x\nonumber\\
 = &  
 \int_{\varphi(U_R)} D \sqrt{g(0)}  (g^{i j} (x) - g^{i j} (0)) 
  \left( \frac{\partial \hat{W}}{\partial x_i} - \frac{a}{2D} \hat{W} \frac{\partial
  \hat{f}}{\partial x_i} \right)  \left( \frac{\partial \hat{W}}{\partial x_j} -  \frac{a}{2D} \hat{W} \frac{\partial \hat{f}}{\partial x_j} \right)  \mathrm{d} x
  \nonumber\\
  & + \int_{\varphi(U_R)} D \left( \sqrt{g(x)}  - \sqrt{g(0)}  \right) g^{i j} (x) \left( \frac{\partial \hat{W}}{\partial x_i} -
  \frac{a}{2D} \hat{W} \frac{\partial \hat{f}}{\partial x_i} \right)  \left(
  \frac{\partial \hat{W}}{\partial x_j} - \frac{a}{2D} \hat{W} \frac{\partial
  \hat{f}}{\partial x_j} \right)  \mathrm{d} x \nonumber\\
  &+  \int_{\varphi(U_R)} D  \left| \nabla \hat{W} - \frac{a}{2D} \hat{W} \nabla \hat{f}   \right|^2 \mathrm{d} x  \nonumber\\ 
=: &I_1+I_2+I_3. 
\end{align*}
where
\begin{align*}
I_1 &:=
 \int_{\varphi(U_R)} D \sqrt{g(0)}  (g^{i j} (x) - g^{i j} (0)) 
  \left( \frac{\partial \hat{W}}{\partial x_i} - \frac{a}{2D} \hat{W} \frac{\partial
  \hat{f}}{\partial x_i} \right)  \left( \frac{\partial \hat{W}}{\partial x_j} -  \frac{a}{2D} \hat{W} \frac{\partial \hat{f}}{\partial x_j} \right)  \mathrm{d} x \nonumber\\
I_2 &:=
\int_{\varphi(U_R)} D \left( \sqrt{g(x)}  - \sqrt{g(0)}  \right) g^{i j} (x) \left( \frac{\partial \hat{W}}{\partial x_i} -
\frac{a}{2D} \hat{W} \frac{\partial \hat{f}}{\partial x_i} \right) 
\left(\frac{\partial \hat{W}}{\partial x_j} - \frac{a}{2D} \hat{W}\frac{\partial\hat{f}}{\partial x_j} \right)  \mathrm{d} x \nonumber,\\
I_3 &:=
\int_{\varphi(U_R)} D  \left| \nabla \hat{W} - \frac{a}{2D} \hat{W} \nabla \hat{f}   \right|^2 \mathrm{d} x  .
\end{align*}
In geodesic normal coordinates, we recall the metric expansions \eqref{tay}. Consequently, there exists a constant \(C>0\), independent of \(R\) and \(D\), such that
\[
\sup_{x\in B_R(0)}
\left( |g^{ij}(x)-\delta^{ij}| + |\sqrt{ g(x)}-1| \right) \le C R^2.
\]
Hence,
\[
|I_1| + |I_2| \le C R^2 I_3.
\]
Taking \(R\) sufficiently small so that \(CR^2<1\), we obtain
\[
E(W) \ge (1 - C R^2) I_3.
\]

It remains to estimate \(I_3\). Applying Lemma 2.3(i) in \cite{PZZ19} to \(\hat W\) and \(\hat f\), using \eqref{hessij-local}, we obtain
\begin{align}
    I_3\geq
    \frac{a}{2}
    \left[
        \sum_{i=1}^N\bigl(|\kappa_i|+\kappa_i\bigr)
        -
        2NR
        \|D^3\hat f\|_{L^\infty(B(0,R))}
    \right]
    \int_{B(0,R)}\hat W^2\,\mathrm dx.
    \label{I3}
\end{align}

On the other hand, from the metric expansion \eqref{tay}, there exists a constant \(C>0\) such that
\[
    1-CR^2
    \leq
    \sqrt{g(x)}
    \leq
    1+CR^2
    \qquad
    \text{in }B(0,R).
\]
Consequently, the Euclidean and Riemannian \(L^2\)-norms satisfy
\[
    (1-CR^2)
\int_{B(0,R)}\hat W^2\,\mathrm dx
    \leq\!
\int_{U_R}W^2\,\mathrm dV_g
=\! \int_{B(0,R)}\hat{W}^2\sqrt{g(x)}\,\mathrm dx
    \leq
    (1+CR^2) \!   \int_{B(0,R)}\hat W^2\,\mathrm dx.
\]
Since \(M\) is compact and \(f\) is smooth, the third-order derivatives of \(\hat f\) are uniformly bounded in \(B(0,R)\) for sufficiently small \(R\). Hence there exists a constant \(C>0\), independent of \(R\), such that
\[
\|D^3\hat f\|_{L^\infty(B(0,R))} \le C.
\]Combining this with the preceding estimates and enlarging the constant \(C\) if necessary, we conclude that
\begin{equation}\label{eq:manifold-local-lower-bound}
E(W)
\geq
\frac{a}{2}
\left(
\sum_{i=1}^N (|\kappa_i|+\kappa_i)
-
CR
\right)
\int_M W^2 \V,
\end{equation}
where \(C>0\) depends only on \(f\), \(g\), \(p_0\), and \(N\). This proves the lemma.
\end{proof}

 Based on Lemma \ref{lem:2.3}, we have the following lower bound estimate:
\begin{proposition}\label{prop:lower} 
For the principal eigenvalue $\lambda(D)$, we have the following estimate:
\[
\liminf_{D \to 0} \lambda(D) \geq \min_{p \in \Sigma} \left\{ c(p) + \frac{a}{2} \sum_{i=1}^N \left( |\kappa_i(p)| + \kappa_i(p) \right) \right\}.
\]
\end{proposition}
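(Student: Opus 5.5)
We localize with a partition of unity and use Lemma \ref{lem:2.2} and Lemma \ref{lem:2.3}. Fix a small $R>0$. Around each $p\in\Sigma$, take the geodesic ball $U_R(p)$; these balls are disjoint for small $R$. On the compact set $K_R:=M\setminus\bigcup_{p\in\Sigma}B_g(p,R/2)$ we have $|\nabla_M f|\ge m_R>0$. Cover $K_R$ by finitely many small geodesic balls. Then choose smooth functions $\zeta_k$ (one for each critical point and one for each covering ball) with $\sum_k\zeta_k^2\equiv 1$, each $\zeta_k$ supported in its ball, and $|\nabla_M\zeta_k|\le C_R$. The point of squaring is that, with $W_k:=\zeta_k w$, we get $\sum_k\int_M W_k^2\V=1$.

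Next we sum Lemma \ref{lem:2.2} over $k$:
\[
\sum_k\int_M\Bigl(D\bigl|\nabla_M W_k-\tfrac{a}{2D}W_k\nabla_M f\bigr|^2+cW_k^2\Bigr)\V=\lambda(D)+D\int_M\Bigl(\sum_k|\nabla_M\zeta_k|^2\Bigr)w^2\V .
\]
The error term is at most $DC_R$. The quotient $W^2/\zeta^2$ should be read as $w^2$, so there is no singularity at the zeros of $\zeta_k$.

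We then bound each summand from below in one of two ways.
\begin{itemize}
\item For a ball centered at $p\in\Sigma$, Lemma \ref{lem:2.3} with $p_0=p$ gives $E(W_k)\ge\frac a2\bigl(\sum_i(|\kappa_i(p)|+\kappa_i(p))-CR\bigr)\int W_k^2$. Also $\int cW_k^2\ge(c(p)-\omega(R))\int W_k^2$, where $\omega$ is the modulus of continuity of $c$.
\item For a ball in $K_R$, Lemma \ref{lem:2.1} applied to $W_k$ gives $E(W_k)\ge\bigl(\frac{a^2}{4D}m_R^2-C\bigr)\int W_k^2$. Since $c\ge c_*>0$, this summand is at least $\bigl(\frac{a^2m_R^2}{4D}-C\bigr)\int W_k^2$.
\end{itemize}

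Set $\Lambda:=\min_{p\in\Sigma}\{c(p)+\frac a2\sum_i(|\kappa_i(p)|+\kappa_i(p))\}$. Summing the two kinds of bounds and using $\sum_k\int W_k^2=1$, every summand carries a coefficient of at least $\min\{\Lambda-CR-\omega(R),\ \frac{a^2m_R^2}{4D}-C\}$. For $D$ small (depending on $R$) the second entry exceeds $\Lambda$, so
\[
\lambda(D)\ge \Lambda-CR-\omega(R)-C_RD .
\]
Letting $D\to0$ first and then $R\to0$ gives the claim.

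The main obstacle is making the choice of $R$ compatible with Lemma \ref{lem:2.3}. That lemma needs $R$ below the normal-coordinate radius and the constant $C$ must be uniform. Its constant depends only on $f$, $g$ and the finitely many points of $\Sigma$, so we can take a single $R_0$ and a single $C$ for all $p\in\Sigma$. The cutoff gradient bound $C_R\sim R^{-2}$ is harmless because it is multiplied by $D$. We also never need Lemma \ref{lem:2.3} at non-critical centers, since Lemma \ref{lem:2.1} handles all of $K_R$ at once.
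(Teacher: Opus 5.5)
Your argument is correct. Its skeleton matches the paper's: a partition of unity $\{\zeta_k^2\}$, the identity of Lemma \ref{lem:2.2}, Lemma \ref{lem:2.3} on the balls around $\Sigma$, and Lemma \ref{lem:2.1} away from $\Sigma$. The difference is in how the region away from $\Sigma$ is handled.

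The paper keeps only the pieces $k=1,\dots,K$ in the summed inequality. It then shows separately that the remaining mass vanishes. From Lemma \ref{lem:2.2} and an upper bound on $\lambda(D)$ it gets $E(W_0)\le C_R$; it cites Proposition \ref{prop:upper}, though $\lambda(D)\le\max_M c$ would suffice. Lemma \ref{lem:2.1} then gives $\int_M W_0^2\,\V\le C_R D$, hence $\sum_{k\ge1}\int_M W_k^2\,\V\to 1$.

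You instead keep the noncritical pieces in the sum. Their coefficient $\frac{a^2m_R^2}{4D}-C$ eventually exceeds your $\Lambda$, so one minimum over all coefficients gives $\lambda(D)\ge\Lambda-CR-\omega(R)-C_RD$ directly. Your route needs no a priori bound on $\lambda(D)$ and is explicitly quantitative. The paper's route additionally shows that the $L^2$-mass of $w$ concentrates near $\Sigma$, which is the behaviour it uses to motivate studying the support of the limit measure $\mu$.

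Two small remarks. First, covering $K_R$ by many balls is unnecessary, because Lemma \ref{lem:2.1} is global. A single cutoff supported in $M\setminus\bigcup_{p\in\Sigma}\overline{B_g(p,R/2)}$ suffices, as in the paper. Second, if you do use many balls, $m_R$ must bound $|\nabla_M f|$ from below on the supports of those $\zeta_k$, not merely on $K_R$, since the balls can protrude into $B_g(p,R/2)$. This is easy to arrange: balls of radius at most $R/4$ centered in $K_R$ stay inside the compact set $\{\operatorname{dist}_g(\cdot,\Sigma)\ge R/4\}$, which contains no critical points.
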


\begin{proof}
Recall that the critical set of the Morse function \(f\) on \(M\) is finite. Write
\[
\Sigma=\{p_1,\ldots,p_K\}, \qquad K\in\mathbb N.
\]
Let \(w\) be the principal eigenfunction normalized by \(\int_M w^2 \V=1\). Choose \(R>0\) sufficiently small such that the geodesic balls \(B_g(p_k,R)\) are pairwise disjoint:
\[
B_g(p_k,R)\cap B_g(p_l,R)=\emptyset \qquad (k\neq l).
\]
Set
\[
M_0:=M\setminus \bigcup_{k=1}^K \overline{B_g(p_k,R/2)} .
\]
Then
\[
\{B_g(p_1,R),\ldots,B_g(p_K,R),M_0\}
\]
is an open covering of \(M\).

Let
$\{\zeta_k^2\}_{k=0}^K$
be a smooth partition of unity subordinate to this covering. Thus
\(\zeta_k\in C^\infty(M)\), \(0\leq \zeta_k\leq 1\), and
\[
\sum_{k=0}^K \zeta_k^2(p)=1,
\qquad \forall p\in M,
\]
with
\[
\operatorname{supp} \zeta_0\subset M_0,
\qquad
\operatorname{supp} \zeta_k\subset B_g(p_k,R),
\quad (k=1,\ldots,K).
\]
Moreover, the functions can be chosen so that
\[
|\nabla_M\zeta_k|\leq \frac{C}{R},
\qquad k=0,1,\ldots,K,
\]
for some constant \(C>0\) independent of \(R\).  Since the balls \(B_g(p_k,R)\) are pairwise disjoint, each point \(p\in M\) belongs to at most two sets in the covering; consequently, at most two of the functions \(\zeta_k\) are nonzero at any point. Hence,
\begin{equation}\label{gradient bound}
\sum_{k=0}^K|\nabla_M\zeta_k(p)|^2
\leq\frac{C}{R^2},
\qquad \forall p\in M.
\end{equation}

Define \(W_k = \zeta_k w\) for \(k = 0, 1, \ldots, K\). Then $W_k\in H_0^1(B_g(p_k,R))$ for \(1\leq k\le K\).
Thanks to Lemma \ref{lem:2.3}, we obtain
\[
E(W_k) \geq \frac{a}{2} \left( \sum_{i=1}^N \left( |\kappa_i(p_k)| + \kappa_i(p_k) \right) - C R \right) \int_M W_k^2 \V.
\]
From Lemma \ref{lem:2.2}, for each \(1\le k\le K\),
\begin{align*}
   & \lambda(D) \int_M W_k^2 \V + D \int_M \frac{W_k^2}{\zeta_k^2} |\grad \zeta_k|^2 \V\\=&
E(W_k) +\int_M    c W_k^2   \V \\
\ge&
   \frac{a}{2} \left( \sum_{i=1}^N \left( |\kappa_i(p_k)| + \kappa_i(p_k) \right) - C R \right) \int_M W_k^2 \V
   +\int_M  c W_k^2 \V. 
\end{align*}
Summing over $k=1,...,K$, we get
\begin{align}
&\lambda(D) \sum_{k=1}^K \int_M W_k^2 \V \label{I:lb} \\
\geq& \left( \min_{1 \leq k \leq K} \left\{ c(p_k) + \frac{a}{2} \sum_{i=1}^N (|\kappa_i(p_k)| + \kappa_i(p_k)) \right\}\right)\sum_{k=1}^K \int_M W_k^2 \V  - C R  - D \cdot \frac{C}{R^2}.\nonumber
\end{align}
where the term \(-CR\) comes from Lemma \ref{lem:2.3} and the replacement of \(c(p_k)\) by its local minimum with the estimate \(\sum_{k=1}^K\int_M W_k^2\V\le \int_M w^2\V=1\), while the term \(-C D/R^2\) follows from Lemma \ref{lem:2.2} together with the gradient bound \eqref{gradient bound}

It remains to estimate the contribution from the non-critical region \(M_0\). Since \(M_0\) is compact and contains no critical points of \(f\), there exists \(\delta>0\) such that \(|\grad f| \geq \delta\). Applying Lemma \ref{lem:2.1} to \(W_0\), we obtain
\begin{align*}
E(W_0)\ge &\frac{a^2}{4D}  \int_{M} \left[ | \nabla_M f|^2 +
     \frac{2D}{a} \Delta_M f \right] W_0^2 \mathrm{d} \mathrm{V_g}\\
     =&\frac{a^2}{4D}  \int_{M_0} \left[ | \nabla_M f|^2 +
     \frac{2D}{a} \Delta_M f \right] W_0^2 \mathrm{d} \mathrm{V_g}\\
     \ge& \frac{a^2}{4D}  \int_{M_0} \left[ \delta^2 -
     \frac{2D}{a} \|\Delta_M f \|_{L^\infty(M)}\right] W_0^2 \mathrm{d} \mathrm{V_g}.
\end{align*}
For $D>0$ sufficiently small such that 
\(\frac{2D}{a}\|\Delta_M f\|_{L^\infty(M)} \le \delta^2/2\), we have
\begin{align*}
\int_{\Omega_0}  W_0^2 \mathrm{d} \mathrm{V_g}\leq
\frac{4D}{a^2\left[ \delta^2 -
     \frac{2D}{a} |\Delta_M f |_{L^\infty}\right]}E(W_0)\leq \frac{8 D}{\delta^2 a^2}E(W_0).
\end{align*}
On the other hand, from Lemma \ref{lem:2.2} and the upper bound for $\lambda(D)$ above (Proposition \ref{prop:upper}), we have 
\begin{align*}
E(W_0)\leq |\lambda(D)|\int_M w^2\V+\max_{p\in M}c(p)\int_M w^2\V+C_R D\int_M w^2\V \leq C_R
\end{align*}
for some constant \(C_R>0\) depending on \(R\) (and on \(f,g\)) but independent of \(D\). Hence,    
\[
\int_Mw^2 \sum_{i=1}^K \zeta_k^2\V=1-\int_M W_0^2 \V\ge 1-C_R D.
\]
Letting \(D\to0\) (with \(R>0\) fixed), we obtain
\[
\int_Mw^2 \sum_{i=1}^K \zeta_k^2\V=\sum_{k=1}^K \int_M W_k^2 \V \to 1.
\]
Taking \(\liminf\) as \(D \to 0\) in \eqref{I:lb} and then letting \(R \to 0\), we conclude
\[
\liminf_{D \to 0} \lambda(D) \geq \min_{p \in \Sigma} \left\{ c(p) + \frac{a}{2} \sum_{i=1}^N (|\kappa_i(p)| + \kappa_i(p)) \right\},
\]
which completes the proof.  
\end{proof}

\noindent\textbf{Conflict of interest:} The authors declare that they have no conflict of interest.

\noindent\textbf{Acknowledgements:}
 The research of Xin Xu is supported by NSF of
China (Youth Program No. 12401255).
\bibliographystyle{abbrv}
\bibliography{bib}

\end{document}